\documentclass[12pt,a4paper,reqno]{amsart}
\usepackage{amsmath}
\usepackage{amsthm}
\usepackage{amsfonts}
\usepackage{amssymb}
\usepackage{color}
\usepackage{mathrsfs}
\usepackage{graphicx}

\setlength{\textwidth}{15truecm}
\setlength{\hoffset}{-1.2truecm}
\setlength{\textheight}{25.5truecm}
\setlength{\voffset}{-1.9cm}

\setcounter{section}{0}

\numberwithin{equation}{section}

\theoremstyle{plain}
\newtheorem{theorem}{Theorem}

\theoremstyle{plain}
\newtheorem{question}{Question}

\theoremstyle{plain}
\newtheorem{lemma}{Lemma}[section]

\theoremstyle{remark}
\newtheorem*{remark}{Remark}


%
%

\def\bfs{\mathbf{s}}

\def\bfv{\mathbf{v}}
\def\bfw{\mathbf{w}}
\def\bfx{\mathbf{x}}
\def\bfy{\mathbf{y}}

\def\T{\mathbf{T}}

\def\dd{\mathrm{d}}

\def\eps{\varepsilon}

\def\Qq{\mathbb{Q}}
\def\Rr{\mathbb{R}}

\def\BBB{\mathcal{B}}

\def\EEE{\mathcal{E}}

\def\LLL{\mathcal{L}}
\def\MMM{\mathcal{M}}

\def\PPP{\mathcal{P}}

\def\SSS{\mathcal{S}}

\def\WWW{\mathcal{W}}

\def\YYY{\mathcal{Y}}

\def\NNNN{\mathscr{N}}

\def\SSSS{\mathscr{S}}

\renewcommand{\le}{\leqslant}
\renewcommand{\ge}{\geqslant}

%
%

\title[Kronecker--Weyl equidistribution theorem]
{A note on the Kronecker--Weyl\\
equidistribution theorem}

\author[Beck]{J. Beck}

\address{Department of Mathematics, Hill Center for the Mathematical Sciences, Rutgers University, Piscataway NJ 08854, USA}

\email{jbeck@math.rutgers.edu}

\author[Chen]{W.W.L. Chen}

\address{School of Mathematical and Physical Sciences, Faculty of Science and Engineering, Macquarie University, Sydney NSW 2109, Australia}

\email{william.chen@mq.edu.au}

\author[Yang]{Y. Yang}

\address{School of Science, Beijing University of Posts and Telecommunications, Beijing 100876, China}

\email{yangyx@bupt.edu.cn}

\begin{document}

\keywords{geodesics, sequences, density, uniformity}

\subjclass[2010]{11K38, 37E35}

\begin{abstract}
We study the relationship between the discrete and the continuous versions of the
Kronecker--Weyl equidistribution theorem, as well as their possible extension to manifolds in higher dimensions.
We also investigate a way to deduce in some limited way uniformity results in higher dimension from results in lower dimension.
\end{abstract}

\maketitle

\thispagestyle{empty}

%
%

\section{Introduction}\label{sec1}

There are two versions of the classical Kronecker--Weyl equidistribution theorem.
Let $d$ be a fixed positive integer.
If $v_1,\ldots,v_d,1$ are linearly independent over~$\Qq$,
then the vector $\bfv=(v_1,\ldots,v_d)\in\Rr^d$ is called a Kronecker vector,
and the vector $\bfv^*=(v_1,\ldots,v_d,1)\in\Rr^{d+1}$ is called a Kronecker direction.
The continuous version concerns the distribution of half-infinite geodesics
with Kronecker directions in the unit torus $[0,1)^{d+1}$,
and much of this monograph concerns extensions of this version from the unit torus $[0,1)^2$
to arbitrary finite polysquare translation surfaces.

On the other hand, the discrete version of the Kronecker--Weyl equidistribution theorem
concerns the distribution of half-infinite Kronecker sequences in the unit torus $[0,1)^d$.
A natural first question is then to attempt to extend this version from the unit torus $[0,1)^2$
to arbitrary finite polysquare translation surfaces.

In general, for any fixed positive integer~$d$, we consider the continuous problem concerning
the distribution of half-infinite geodesics with Kronecker directions in finite polycube translation manifolds in $d+1$ dimensions,
as well as the discrete problem of the distribution of half-infinite Kronecker sequences
in finite polycube translation manifolds in $d$ dimensions.
The following are natural questions:

\begin{question}\label{q1}
Is it true that, for any integer $d\ge1$, any half-infinite geodesic with a Kronecker direction
in a finite polycube translation manifold in $d+1$ dimensions is uniformly distributed?
If not, then under what condition can we guarantee uniform distribution?
\end{question}

\begin{question}\label{q2}
Is it true that, for any integer $d\ge2$, any half-infinite Kronecker sequence
in a finite polycube translation manifold in $d$ dimensions is uniformly distributed?
If not, then under what condition can we guarantee uniform distribution?
\end{question}

\begin{question}\label{q3}
The classical Kronecker--Weyl equidistribution theorem on the unit torus has some time-quantitative extensions
with explicit error terms.
Under what conditions can we establish time-quantitative uniformity in these more general settings?
\end{question}

For Question~\ref{q1}, the Gutkin--Veech theorem \cite{gutkin84,veech87} answers the special case $d=1$ in the affirmative.
However, for $d=2$, we are currently not able to establish uniformity results for half-infinite geodesics
in a general finite polycube translation $3$-manifold.

In this paper, we study the relationship between the discrete and the continuous versions
of possible non-integrable analogues of the Kronecker--Weyl equidistribution theorem
concerning finite polysquare translation surfaces and some related finite polycube translation $3$-manifolds.
In Section~\ref{sec2}, we investigate a relationship between the discrete version and the continuous version in the special case $d=2$.
Theorem~\ref{thm2}, a by-product of this study, gives an affirmative answer to the special case $d=2$ of Question~2.
Then in Section~\ref{sec3}, we develop a way to step up the problem by one dimension, and this leads to various infinite classes
of polycube translation manifolds where half-infinite Kronecker sequences and half-infinite geodesics with Kronecker directions
are uniformly distributed.

%
%

\section{A simple equivalence principle}\label{sec2}

Before we go any further, it is appropriate that we understand what we mean by a Kronecker sequence
on a finite polysquare translation surface~$\PPP$.
To properly define a half-infinite Kronecker sequence $\bfv_0+j\bfv$, $j=0,1,2,3,\ldots,$
with starting point $\bfv_0$ and step vector $\bfv$ on a polysquare translation surface~$\PPP$,
we need a supporting half-infinite geodesic $\LLL(t)$, $t\ge0$, with $\LLL(0)=\bfv_0$ on $\PPP$
and a time step $g\in\Rr$, such that the finite geodesic segment $\LLL(t)$, $0\le t\le g$,
is in the direction of the step vector $\bfv$ and has length equal to~$\vert\bfv\vert$.
Then $\bfv_0+j\bfv=\LLL(jg)$, $j=0,1,2,3,\ldots.$
Clearly the Kronecker sequence is half-infinite if and only if $\bfv_0$ is a non-pathological
starting point of a geodesic with direction $\bfv$ on~$\PPP$.

Suppose that $\PPP$ is a polysquare translation surface with $s$ atomic squares,
and that $\MMM=\PPP\times[0,1)$ denotes the associated polycube translation $3$-manifold
which is the cartesian product of $\PPP$ and the unit torus $[0,1)$.

A vector $\bfv=(v_1,v_2)\in\Rr^2$ is said to be a Kronecker vector
if $v_1,v_2,1$ are linearly independent over~$\Qq$.
We are interested in the distribution of a Kronecker sequence
$\bfv_0+j\bfv$, $j=0,1,2,3,\ldots,$ with starting point $\bfv_0$
on the polysquare translation surface~$\PPP$.

A vector $\bfv^*=(v_1,v_2,1)\in\Rr^3$ is said to be a Kronecker direction
if $v_1,v_2,1$ are linearly independent over~$\Qq$.
We are interested in the distribution of a half-infinite geodesic $\LLL(t)$, $t\ge0$,
with starting point $\LLL(0)$ and direction $\bfv^*$ in the associated polycube translation $3$-manifold~$\MMM$.

We have a simple equivalence principle relating half-infinite Kronecker sequences and half-infinite geodesics
with Kronecker direction.

\begin{theorem}\label{thm1}
Suppose that $\PPP$ is a finite polysquare translation surface,
and that $\MMM=\PPP\times[0,1)$ denotes the associated polycube translation $3$-manifold
which is the cartesian product of $\PPP$ and the unit torus $[0,1)$.
Suppose also that $\bfv=(v_1,v_2)\in\Rr^2$ is a Kronecker vector, so that $\bfv^*=(v_1,v_2,1)\in\Rr^3$
is a Kronecker direction.
Then the following two statements are equivalent:

\emph{(i)}
Every half-infinite Kronecker sequence $\bfv_0+j\bfv$, $j=0,1,2,3,\ldots,$ on $\PPP$
is uniformly distributed.

\emph{(ii)}
Every half-infinite geodesic $\LLL(t)$, $t\ge0$, with direction $\bfv^*$ in $\MMM$
is uniformly distributed.
\end{theorem}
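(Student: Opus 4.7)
The plan is to recognise that the geodesic flow on $\MMM=\PPP\times[0,1)$ in direction $\bfv^*=(v_1,v_2,1)$ is the suspension flow over the discrete translation $P\mapsto P+\bfv$ of~$\PPP$ with constant roof function equal to~$1$. Choosing the parametrisation of the geodesic so that its third coordinate advances at unit rate, we may write $\LLL(t)=(P(t),Z(t))$ with $Z(t)=(z_0+t)\bmod 1$ and $P(t)$ the $\PPP$-geodesic from $\bfv_0$ in direction~$\bfv$ moving by~$\bfv$ per unit time. The first returns to the Poincar\'e section $\{Z=z_0\}$ occur at the integer times, and they trace out exactly the Kronecker sequence $P(j)=\bfv_0+j\bfv$. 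Since~$\bfv$ is a Kronecker vector and the third factor $[0,1)$ contributes no singularities, $\bfv_0$ is non-pathological on~$\PPP$ if and only if $(\bfv_0,z_0)$ is non-pathological on~$\MMM$, so the admissible initial conditions in (i) and (ii) match up.

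For (i)$\Rightarrow$(ii) I would fix a continuous test function~$f$ on~$\MMM$ and introduce the fibrewise average
\[
\tilde F(P_0)=\int_0^1 f\bigl(P_0+s\bfv,\,(z_0+s)\bmod 1\bigr)\,\dd s,
\]
where $P_0+s\bfv$ denotes the $\PPP$-geodesic from~$P_0$ in direction~$\bfv$ at parameter~$s$. Partitioning $[0,N]$ into unit intervals gives
\[
\frac1N\int_0^N f(\LLL(t))\,\dd t=\frac1N\sum_{j=0}^{N-1}\tilde F(\bfv_0+j\bfv)+O(N^{-1}),
\]
and hypothesis (i) applied to~$\tilde F$ forces the right-hand side to tend to $\area(\PPP)^{-1}\int_\PPP\tilde F$. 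Fubini combined with the area-preserving nature of the geodesic flow on~$\PPP$ (invariance under translation by~$s\bfv$) identifies this with $\area(\PPP)^{-1}\int_\MMM f$, which is the required space average since $\MMM=\PPP\times[0,1)$.

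For (ii)$\Rightarrow$(i), given a continuous test function~$h$ on~$\PPP$, I would probe the geodesic flow in~$\MMM$ with a bump in the $z$-variable. Pick a continuous nonnegative $\eta_\eps$ on $\Rr/\Zz$ supported in $[0,\eps]$ with $\int_0^1\eta_\eps=1$, and apply~(ii) to the continuous function $h(x,y)\eta_\eps(z)$ on~$\MMM$. A direct computation with the parametrisation above gives
\[
\frac1N\int_0^N h(P(t))\eta_\eps(Z(t))\,\dd t=\frac1N\sum_{j=0}^{N-1}H_\eps(\bfv_0+j\bfv),\qquad H_\eps(P_0)=\int_0^1\eta_\eps(s)\,h(P_0+s\bfv)\,\dd s.
\]
By (ii) the left-hand side tends to $\area(\PPP)^{-1}\int_\PPP h$, while uniform continuity of~$h$ on the compact surface~$\PPP$ gives $\sup_{P_0}|H_\eps(P_0)-h(P_0)|\to 0$ as $\eps\to 0$. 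Sending $N\to\infty$ first and then $\eps\to 0$ yields $\lim_N N^{-1}\sum_{j<N}h(\bfv_0+j\bfv)=\area(\PPP)^{-1}\int_\PPP h$, which is (i).

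The main delicate point is checking that the auxiliary functions $\tilde F$ and $H_\eps$ are regular enough for the uniform-distribution hypothesis to transfer to them: a unit-length segment in direction~$\bfv$ can cross several edges between atomic squares, so these functions will inherit jump discontinuities along finitely many arcs in~$\PPP$. Since~$\bfv$ is a Kronecker vector and~$\PPP$ has only finitely many conical singularities, only a measure-zero set of starting points produces a segment meeting a singular vertex, so $\tilde F$ and $H_\eps$ are bounded and Riemann integrable on~$\PPP$. This is precisely what is needed for the standard Weyl-type extension of uniform distribution from indicator functions of boxes to these test functions.
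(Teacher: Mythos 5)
Your proof is correct, and at bottom it is the suspension-flow (Poincar\'e section) rendering of the same correspondence the paper exploits: cut the geodesic into unit time blocks, so that the block beginning at time $j$ is governed by the point $\bfv_0+j\bfv$ of the Kronecker sequence. The implementations differ in both directions, though. For (ii)$\Rightarrow$(i) the paper avoids any approximation parameter: it sweeps the test set $S\times\{0\}$ along $\bfv^*$ to obtain a set $S^*$ with $\lambda_3(S^*)=\lambda_2(S)$ for which the time spent in $S^*$ is \emph{exactly} proportional to the counting function of the sequence in $S$ (see \eqref{eq2.2}--\eqref{eq2.4}), whereas you test with $h(x,y)\eta_\eps(z)$ and let $\eps\to0$ using uniform continuity of $h$; both are valid, your bump being the soft counterpart of the swept set, which dispenses with the limiting step. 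For (i)$\Rightarrow$(ii) the paper gives no details and defers to a Koksma-inequality-type argument as in \cite{BDY20a,BCY24}; your fibrewise average $\tilde F$, together with the observation that it is bounded and continuous off the finitely many segments of starting points whose length-$\vert\bfv\vert$ forward segment meets a singular point (hence Riemann integrable, so that uniform distribution of the sequence transfers to it), yields a self-contained qualitative proof of this direction --- a genuine plus --- at the price of producing no error terms, which is precisely what the Koksma route is designed to supply in the time-quantitative setting the paper is interested in (Question~\ref{q3}). Two minor points: the discontinuities of $\tilde F$ and $H_\eps$ come only from singular points, not from edge crossings, since continuity on $\PPP$ and $\MMM$ is with respect to the glued translation-surface topology; and it is worth stating explicitly that your parametrisation with unit speed in the third coordinate differs from arc length by the constant factor $(v_1^2+v_2^2+1)^{1/2}$, which leaves uniform distribution unaffected and reconciles your formulas with the paper's.
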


\begin{proof}[Sketch of proof]
((ii) $\Rightarrow$ (i))
The argument here is rather simple.
Suppose that $\PPP$ has $s$ atomic squares.
To establish (i), let $S$ be a convex set in an atomic square of~$\PPP$.
Consider the first $J$ terms of the Kronecker sequence, given by
$\bfv_0+j\bfv$, $j=0,1,\ldots,J-1$.
The number of terms of this finite sequence in $S$ is given by
\begin{displaymath}
\vert\{j=0,1,\ldots,J-1:\bfv_0+j\bfv\in S\}\vert,
\end{displaymath}
with corresponding expectation given by
\begin{displaymath}
\frac{J\lambda_2(S)}{s},
\end{displaymath}
where $\lambda_2$ denotes $2$-dimensional Lebesgue measure.
To establish uniformity of the Kronecker sequence on~$\PPP$, we need to show that
\begin{equation}\label{eq2.1}
\vert\{j=0,1,\ldots,J-1:\bfv_0+j\bfv\in S\}\vert\bigg/\frac{J\lambda_2(S)}{s}\to1
\quad\mbox{as $J\to\infty$}.
\end{equation}
Let $\LLL(t)$, $t\ge0$, be a half-infinite geodesic with starting point $\LLL(0)=(\bfv_0,0)$
and direction $\bfv^*$ on~$\MMM$.
Let $S^*\subset\MMM$ be obtained by sweeping the set $S\times\{0\}$ along by the vector~$\bfv^*$,
so that $\lambda_3(S^*)=\lambda_2(S)$,
where $\lambda_3$ denotes $3$-dimensional Lebesgue measure.
Consider a finite geodesic segment $\LLL(t)$, $0\le t\le T=J(v_1^2+v_2^2+1)^{1/2}$.
Then the total length of the parts of this geodesic segment in $S^*$ is given by
\begin{align}\label{eq2.2}
&
\vert\{0\le t\le T:\LLL(t)\in S^*\}\vert
\nonumber
\\
&\quad
=(v_1^2+v_2^2+1)^{1/2}\vert\{j=0,1,\ldots,J-1:\bfv_0+j\bfv\in S\}\vert,
\end{align}
with corresponding expectation given by
\begin{equation}\label{eq2.3}
\frac{T\lambda_3(S^*)}{s}=\frac{J(v_1^2+v_2^2+1)^{1/2}\lambda_2(S)}{s}.
\end{equation}
The set $S^*$ is a union of finitely many convex sets in atomic cubes of~$\MMM$.
Suppose that (ii) holds.
Uniformity of the half-infinite geodesic in $\MMM$ then implies that
\begin{equation}\label{eq2.4}
\vert\{0\le t\le T:\LLL(t)\in S^*\}\vert\bigg/\frac{T\lambda_3(S^*)}{s}\to1
\quad\mbox{as $T\to\infty$}.
\end{equation}
It is clear that \eqref{eq2.1} follows immediately on combining \eqref{eq2.2}--\eqref{eq2.4}.

((i) $\Rightarrow$ (ii))
In \cite[Section~3.4.1]{BDY20a} or \cite[Section~3.6]{BCY24}, a result concerning uniformity
of a half-infinite geodesic is deduced from a result concerning uniformity of a sequence,
by an application of the Koksma inequality.
Here we apply an analogous Koksma inequality type argument.
\end{proof}

The Gutkin--Veech theorem gives uniformity to any half-infinite geodesic with Kronecker direction on a finite polysquare translation surface.
As a simple application of Theorem~\ref{thm1}, we establish the analogous result for half-infinite Kronecker sequences.

\begin{theorem}\label{thm2}
Any half-infinite Kronecker sequence $\bfv_0+j\bfv$, $j=0,1,2,3,\ldots,$ on a finite polysquare translation surface $\PPP$
is uniformly distributed.
\end{theorem}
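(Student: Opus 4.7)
The plan is to invoke Theorem~\ref{thm1} to trade the discrete Kronecker-sequence problem on $\PPP$ for the continuous geodesic-flow problem on $\MMM=\PPP\times[0,1)$, and then to combine the Gutkin--Veech theorem on $\PPP$ with a Poincar\'e section argument that reduces the remaining joint uniformity question to the classical Kronecker--Weyl equidistribution theorem on a $2$-torus.

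First I would apply the implication (ii)$\Rightarrow$(i) of Theorem~\ref{thm1}: it suffices to show that every half-infinite geodesic $\LLL(t)$, $t\ge0$, with direction $\bfv^*=(v_1,v_2,1)$ in $\MMM$ is uniformly distributed. Since $\MMM$ is a direct product, such a geodesic factors as $\LLL(t)=(\LLL_\PPP(t),z_0+t\bmod1)$, where $\LLL_\PPP$ is the planar geodesic on $\PPP$ with velocity $(v_1,v_2)$ and $z_0\in[0,1)$ is the initial third coordinate. The $\Qq$-linear independence of $v_1,v_2,1$ forces $v_2\ne0$ and $v_1/v_2\notin\Qq$, so the Gutkin--Veech theorem immediately yields uniform distribution of $\LLL_\PPP$ on $\PPP$.

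For the joint uniform distribution on $\MMM$ I would introduce the Poincar\'e cross-section $\Sigma=H\times[0,1)$, where $H\subset\PPP$ is the union of the horizontal edges of the atomic squares. The return time of the geodesic flow on $\MMM$ to $\Sigma$ is the constant $1/\vert v_2\vert$, so the flow is the suspension of its return map by a constant roof, and it is enough to show that the return map is uniquely ergodic. The return map has the product form $(x,z)\mapsto(T_H(x),z+1/\vert v_2\vert\bmod1)$, where $T_H\colon H\to H$ is the planar return map from the Gutkin--Veech analysis. Viewing $\pi\colon\PPP\to[0,1)^2$ as an $s$-to-one cover of the unit torus, one sees that $H$ (as an $s$-sheeted cover of the bottom edge of $[0,1)^2$) decomposes into finitely many $T_H$-invariant circles, on each of which $T_H$ lifts the irrational rotation by $v_1/v_2$. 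On each such invariant circle $C$, the joint return map restricted to $C\times[0,1)$ is therefore a rotation on a $2$-torus, and a direct computation using the $\Qq$-linear independence of $v_1,v_2,1$ shows that its rotation vector is $\Qq$-linearly independent from the torus lattice. The classical $2$-dimensional Kronecker--Weyl theorem then gives uniform distribution of every orbit on this $2$-torus, hence unique ergodicity of the return map on $\Sigma$, of the suspension flow on $\MMM$, and finally---via Theorem~\ref{thm1}---of every Kronecker sequence on $\PPP$.

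The main obstacle is the structural claim that $H$ decomposes into $T_H$-invariant circles on which $T_H$ acts as an irrational rotation. This follows from the cover-theoretic viewpoint $\pi\colon\PPP\to[0,1)^2$ together with the cylinder decomposition of $\PPP$, but its careful formulation requires some bookkeeping of the monodromy of the cover. By contrast, the subsequent $2$-dimensional Kronecker--Weyl computation and the constant-roof suspension step are essentially routine.
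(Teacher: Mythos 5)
Your overall reduction is the same as the paper's: both apply Theorem~\ref{thm1} ((ii) $\Rightarrow$ (i)) to pass to geodesics with Kronecker direction in $\MMM=\PPP\times[0,1)$; the paper then simply quotes Theorem~\ref{thm3} (Theorem~3 of \cite{BCY-a}) for that statement, whereas you try to prove it directly via a Poincar\'e section. That direct proof breaks at exactly the step you flag as the ``main obstacle'': the claim that $H$ (the union of horizontal edges) decomposes into finitely many $T_H$-invariant circles on which $T_H$ lifts the rotation by $v_1/v_2$ is false for general finite polysquare translation surfaces. The return map $T_H$ is a finite extension of the circle rotation in which the ``sheet'' a point lands on depends on its position, i.e.\ a genuinely non-trivial interval exchange, not a union of circle rotations. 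Take the L-shaped surface with atomic squares at $(0,0)$, $(1,0)$, $(0,1)$ and bottom edges $B_1,B_2,B_3$ respectively, and $0<v_1/v_2<1$: one computes that $B_1$ maps partly into $B_3$ and partly into $B_2$, that $B_2$ maps partly into $B_2$ and partly into $B_3$, and that $B_3$ maps onto $B_1$. So no proper union of edges is $T_H$-invariant, and $T_H$ cannot be a rotation of a single circle assembled from the three edges either: continuity of a rotation would force the right end of $B_3$ to be glued to the left end of $B_2$ \emph{and} the right end of $B_2$ to the left end of $B_3$, which closes up a $2$-circle and leaves $B_1$ out. The horizontal cylinder decomposition does not help here, since it is invariant under the \emph{horizontal} flow, not under the tilted return map.

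This gap cannot be patched within your framework: if $H$ did split into $T_H$-invariant rotation circles, then the geodesic flow on $\PPP$ (a constant-roof suspension of $T_H$) would be measurably a finite union of linear flows on $2$-tori, i.e.\ the dynamics on $\PPP$ would be integrable, and the Gutkin--Veech theorem would be a triviality. The whole difficulty that Theorem~\ref{thm3} (and the ergodicity argument of Section~\ref{sec3}, Lemmas \ref{lem32}--\ref{lem34}) is designed to overcome is precisely that the section dynamics is a non-integrable interval exchange, so the two-dimensional Kronecker--Weyl theorem cannot be invoked fibre by fibre. Your peripheral steps are fine --- the constant return time $1/\vert v_2\vert$, the rotation-vector independence computation from the $\Qq$-linear independence of $v_1,v_2,1$, and the passage from unique ergodicity of the section map to the suspension --- but they do not carry the weight; to complete the proof along these lines you would need an argument of the strength of the (i) $\Rightarrow$ (ii) implication in Theorem~\ref{thm4}/Theorem~\ref{thm6}, or simply to cite Theorem~\ref{thm3} as the paper does.
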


\begin{proof}
Let $\MMM=\PPP\times[0,1)$ denote the associated polycube translation $3$-manifold.
In view of Theorem~\ref{thm1}, it suffices to show that
every half-infinite geodesic $\LLL(t)$, $t\ge0$, with Kronecker direction in $\MMM$ is uniformly distributed.
This latter condition is the conclusion of Theorem~\ref{thm3}.
\end{proof}

The following result is \cite[Theorem~3]{BCY-a}.

\begin{theorem}\label{thm3}
Suppose that a polycube translation $3$-manifold $\MMM$ is the cartesian product of a finite polysquare translation surface $\PPP$
and the unit torus $[0,1)$.
Then any half-infinite geodesic in $\MMM$ with a Kronecker direction $\bfv^*\in\Rr^3$ is uniformly distributed unless it hits a singularity.
\end{theorem}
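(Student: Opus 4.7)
The plan is to exploit the product structure $\MMM = \PPP \times [0,1)$, combining the Gutkin--Veech theorem on $\PPP$ with equidistribution in the $z$-coordinate, the two being coupled through the Kronecker independence of $v_1, v_2, 1$ over $\Qq$.

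First I would parameterize the geodesic so that the $z$-coordinate moves at unit speed; then $\LLL(t) = (\gamma(t), (z_0 + t) \bmod 1)$, where $\gamma:[0,\infty) \to \PPP$ is the $\PPP$-geodesic with velocity $(v_1, v_2)$. Since the Kronecker condition forces the slope of $\gamma$ to be irrational, the Gutkin--Veech theorem applies to $\gamma$ and gives its uniform distribution on $\PPP$ with respect to $\lambda_2/s$ (the assumption that $\LLL$ avoids singularities translates to $\gamma$ avoiding singularities of $\PPP$). By Stone--Weierstrass, uniform distribution of $\LLL$ on $\MMM$ reduces to verifying Weyl's criterion for product test functions $f(\bfp)e^{2\pi\ii n z}$: for every continuous $f:\PPP \to \Cc$ and every $n \in \Zz$,
\begin{equation*}
\frac{1}{T}\int_0^T f(\gamma(t))e^{2\pi\ii n(z_0+t)}\,\dd t \longrightarrow \delta_{n,0}\int_\PPP f\,\dd\mu_\PPP \quad\text{as } T \to \infty.
\end{equation*}
The case $n=0$ is exactly Gutkin--Veech; the remaining task is to show that the twisted time average $I_n(T) = \frac{1}{T}\int_0^T f(\gamma(t))e^{2\pi\ii n t}\,\dd t$ tends to zero for every nonzero $n$.

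To control $I_n(T)$, I would exploit that any polysquare surface $\PPP$ with atomic squares $Q_1,\ldots,Q_s$ is a degree-$s$ branched cover of the standard torus $\Tt^2 = [0,1)^2$, each atomic square mapping isometrically onto $\Tt^2$. Partitioning $f$ along the $Q_j$ and Fourier-expanding sheet-by-sheet, $I_n(T)$ decomposes into a double sum over $(m_1,m_2) \in \Zz^2$ and $j \in \{1,\ldots,s\}$ of integrals
\begin{equation*}
\frac{1}{T}\int_0^T \chi_{Q_j}(\gamma(t))\,e^{2\pi\ii(m_1 v_1 + m_2 v_2 + n)t}\,\dd t,
\end{equation*}
weighted by Fourier coefficients $\hat{f}_j(m_1,m_2)$. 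The Kronecker independence of $v_1, v_2, 1$ over $\Qq$ guarantees that every frequency $m_1 v_1 + m_2 v_2 + n$ is nonzero whenever $n \neq 0$, providing pointwise oscillatory cancellation inside each integral.

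The main obstacle is that the indicator $\chi_{Q_j}(\gamma(t))$ is a complicated step function of $t$: the sequence of atomic squares visited by $\gamma$ is governed by a nontrivial interval exchange transformation (the first return map of $\gamma$ to a horizontal transversal) rather than by a simple lattice pattern as on a torus. To turn pointwise oscillation into a genuine bound on the integral, I would pair a quantitative form of Gutkin--Veech controlling the distribution of passage endpoints along each transversal with Abel summation over successive passages, ensuring that the local oscillations combine with net cancellation rather than coherent reinforcement. Summing the resulting bounds over Fourier modes using standard decay estimates for $\hat{f}_j$ (for sufficiently smooth $f$, which suffices by a density argument) would then yield $I_n(T) \to 0$ for every nonzero $n$, completing the verification of Weyl's criterion.
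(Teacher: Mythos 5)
Your reduction is set up correctly: parameterizing so that the $z$-coordinate has unit speed, using Stone--Weierstrass to reduce to test functions $f(\bfp)\ee^{2\pi\ii nz}$, disposing of $n=0$ by Gutkin--Veech, and noting that the Kronecker condition makes every frequency $m_1v_1+m_2v_2+n$ with $n\ne0$ nonzero -- all of that is sound, and it correctly isolates the twisted averages $\frac{1}{T}\int_0^T\chi_{Q_j}(\gamma(t))\,\ee^{2\pi\ii\lambda t}\,\dd t$ as the heart of the matter. But the step that is supposed to make these averages vanish is not a proof, and it is exactly where the entire difficulty of the theorem sits. Nonvanishing of $\lambda$ gives no ``pointwise oscillatory cancellation'' against the factor $\chi_{Q_j}(\gamma(t))$: this indicator is a step function whose switching pattern is governed by the sheet dynamics of the cover $\PPP\to[0,1)^2$, and a priori it may carry spectral mass precisely at the frequency $\lambda$. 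Indeed, if the directional flow on $\PPP$ had an eigenvalue at a nonzero integer frequency $n$, the product flow on $\MMM$ would fail to be uniquely ergodic and the conclusion of the theorem would be false; so what you must prove is that no such resonance occurs, i.e.\ that the eigenvalues of the $\PPP$-flow introduced by the finite (generally non-abelian) branched cover are still killed by the Kronecker condition. Your proposed tools do not reach this: a ``quantitative form of Gutkin--Veech'' controlling passage endpoints on transversals bounds ergodic averages of functions of position, not correlations with the absolute time variable at a fixed frequency, and such time-quantitative discrepancy bounds are in any case not available for general polysquare surfaces (Gutkin--Veech is a qualitative unique ergodicity statement); Abel summation over passages merely reorganizes the sum and leaves you facing the same exponential sum along an interval-exchange-type symbolic orbit, which is the original problem in disguise.

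For comparison, the paper does not prove this statement at all: Theorem~\ref{thm3} is quoted from the separate 45-page preprint \cite{BCY-a}, and the machinery the present paper develops for the related discrete stepping-up result (Theorem~\ref{thm4}) is of a quite different nature -- ergodicity via the Birkhoff theorem, approximation of invariant sets by finite unions of boxes, and a shift/translation argument exploiting density of $\{m_jw\}$ along times where $\{m_jv_1\},\{m_jv_2\}$ are small (Lemmas \ref{lem32}--\ref{lem34}) -- rather than Fourier expansion and oscillatory-integral estimates. So your outline is a reasonable reduction of the problem, but as written it assumes the conclusion at the decisive step and would need, at minimum, a genuine analysis of the eigenvalue/spectral structure of the directional flow on the cover before it could be called a proof.
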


%
%

\section{Stepping up principle}\label{sec3}

Theorem~\ref{thm2}, concerning the distribution of half-infinite Kronecker sequences on~$\PPP$, can be interpreted
as a step-up from the Gutkin--Veech theorem, concerning the distribution of half-infinite geodesics with Kronecker direction on~$\PPP$.
In view of the equivalence given by Theorem~\ref{thm1}, the stepping-up result is Theorem~\ref{thm3},
which establishes the uniformity of a half-infinite geodesic with Kronecker direction on the associated polycube translation $3$-manifold $\MMM$
from the uniformity of a half-infinite geodesic with Kronceker direction on the finite polysquare translation surface~$\PPP$.
This represents a step-up in dimension in some restricted way.

In this section, we expand on this idea.
We establish the following results which, for simplicity, we state only in $2$ and $3$ dimensions.
There are analogues in $d$ and $d+1$ dimensions for every integer $d\ge3$.

The discrete versions of these results concern the distribution of half-infinite Kronecker sequences
on a finite polysquare translation surface $\PPP$ and the analogous problem
in the associated polycube translation $3$-manifold $\MMM$
which is the cartesian product of $\PPP$ and the unit torus $[0,1)$.

\begin{theorem}\label{thm4}
Suppose that $\PPP$ is a finite polysquare translation surface,
and that $\MMM=\PPP\times[0,1)$ denotes the associated polycube translation $3$-manifold
which is the cartesian product of $\PPP$ and the unit torus $[0,1)$.
Suppose also that $\bfv\in\Rr^2$ is the step vector of a Kronecker sequence on~$\PPP$.
Then the following statements are equivalent:

\emph{(i)}
Every half-infinite Kronecker sequence with step vector $\bfv$ on $\PPP$ is uniformly distributed.

\emph{(ii)}
For any $w\in\Rr$ such that $\bfw=(\bfv,w)\in\Rr^3$ is the step vector of a Kronecker sequence on~$\MMM$,
every half-infinite Kronecker sequence with step vector $\bfw$ in $\MMM$ is uniformly distributed.
\end{theorem}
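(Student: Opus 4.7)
The plan is to establish the two implications separately. For (ii) $\Rightarrow$ (i) I would use a direct projection argument. First, fix any $w\in\Rr$ for which $\bfw=(\bfv,w)\in\Rr^3$ is a Kronecker step on $\MMM$; such $w$ form a generic set since $\bfv$ is already Kronecker. Given any half-infinite Kronecker sequence $\bfv_0+j\bfv$ on $\PPP$, I would lift it to the sequence $(\bfv_0,0)+j\bfw$ on $\MMM$. The starting point is non-pathological because the singular set of $\MMM=\PPP\times[0,1)$ is the singular set of $\PPP$ times $[0,1)$. By (ii), the lifted sequence is uniformly distributed on $\MMM$. For any convex subset $S$ of an atomic square of $\PPP$, the cylinder $S\times[0,1)$ has $\lambda_3$-measure equal to $\lambda_2(S)$, and the lifted sequence lies in this cylinder exactly when its projection to $\PPP$ lies in $S$. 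This yields the required asymptotic density $\lambda_2(S)/s$ for the original sequence on $\PPP$, giving (i).

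For (i) $\Rightarrow$ (ii), let $\bfw=(v_1,v_2,w)$ with $v_1,v_2,w,1$ linearly independent over $\Qq$, and consider a Kronecker sequence $\bfw_0+j\bfw=(\bfv_0+j\bfv,(z_0+jw)\bmod 1)$ on $\MMM$. By the Weyl equidistribution criterion on $\MMM$ together with a standard Stone--Weierstrass reduction, I would verify uniformity through the relation
\begin{displaymath}
\frac{1}{J}\sum_{j=0}^{J-1}g(\bfv_0+j\bfv)\,\ee^{2\pi\ii kjw}\longrightarrow\frac{1}{s}\int_\PPP g\,\dd\lambda_2\cdot\int_0^1\ee^{2\pi\ii kz}\,\dd z
\end{displaymath}
for every $g\in C(\PPP)$ and every $k\in\Zz$. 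For $k=0$ this is precisely hypothesis (i); for $k\ne 0$ the right side vanishes, so the task reduces to the Fourier-weighted ergodic average $S_J:=J^{-1}\sum_j g(\bfv_0+j\bfv)\ee^{2\pi\ii kjw}\to 0$.

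Since hypothesis (i) is equivalent to the unique ergodicity of the translation $T\colon\bfp\mapsto\bfp+\bfv$ on $\PPP$ with respect to Lebesgue measure, I would next invoke the Wiener--Wintner ergodic theorem: for a uniquely ergodic continuous transformation, every $g\in C(\PPP)$, every starting point, and every $\alpha\in\Rr$, the limit $\lim_{J\to\infty}J^{-1}\sum_{j=0}^{J-1}g(T^j\bfv_0)\ee^{2\pi\ii j\alpha}$ exists and vanishes unless $\ee^{2\pi\ii\alpha}$ is an eigenvalue of $T$ acting on $L^2(\PPP)$. Applied with $\alpha=kw$, the proof of $S_J\to 0$ reduces to the claim that $kw$ is not an eigenvalue of $T$.

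The main obstacle is then identifying the point spectrum of $T$ on the polysquare surface. Using the fact that $\PPP$, with $s$ atomic squares, is a finite translation cover of the standard torus $[0,1)^2$ whose deck action is compatible with the flat structure, every eigenvalue of $T$ is of the form $\ee^{2\pi\ii\alpha}$ with $\alpha\in\Qq v_1+\Qq v_2+\Qq$ modulo $1$. The hypothesis that $v_1,v_2,w,1$ are linearly independent over $\Qq$ directly precludes $kw\in\Qq v_1+\Qq v_2+\Qq$ for nonzero $k\in\Zz$: any such membership would, after clearing denominators, yield a nontrivial rational relation among $v_1,v_2,w,1$. Hence $kw$ is not an eigenvalue of $T$, the Wiener--Wintner limit vanishes, and (ii) follows. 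As an alternative to the black-box appeal to Wiener--Wintner, one may instead execute a van der Corput differencing argument: bounding $|S_J|^2$ by shifted correlations $c_h=s^{-1}\int_\PPP g(\bfp+h\bfv)\overline{g(\bfp)}\,\dd\lambda_2$ and then observing, via the same spectral input, that the twisted Cesaro average $H^{-1}\sum_{h=1}^H c_h\ee^{2\pi\ii khw}\to 0$.
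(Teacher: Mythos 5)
Your implication (ii) $\Rightarrow$ (i) is identical to the paper's (the paper simply calls it ``almost trivial, by projection from $\MMM$ to~$\PPP$''), so nothing to add there. Your implication (i) $\Rightarrow$ (ii) is a genuinely different argument. The paper proceeds by contradiction: assume the $\bfw$-shift on $\MMM$ is not ergodic, produce a non-trivial invariant partition, approximate it by finite unions of boxes (Lemma~\ref{lem32}), invoke a symmetric-difference lemma (Lemma~\ref{lem33}) and a Kronecker-density lemma (Lemma~\ref{lem34}), and use the circle-rotation structure of the fibre over each $\bfx\in\PPP$ to force a measure contradiction. You instead argue directly through Weyl's criterion, reducing to the Fourier-twisted averages $S_J$ and handling them via Wiener--Wintner and a spectral computation. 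Your route is shorter and more conceptual, and it isolates the real mechanism at work: unique ergodicity of the $\bfw$-shift on $\MMM=\PPP\times[0,1)$ is a constant-roof skew product over the uniquely ergodic $\bfv$-shift on $\PPP$, and unique ergodicity of such a skew product hinges on $e^{2\pi\ii kw}$ not lying in the point spectrum of the base for $k\ne0$.

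Two items need to be filled in for your argument to be complete. First, the claim that every eigenvalue of the $\bfv$-shift $T$ on $\PPP$ is $e^{2\pi\ii\alpha}$ with $\alpha\in\Qq v_1+\Qq v_2+\Qq$ is correct but is not immediate from ``compatibility of the deck action with the flat structure''; you should supply an argument. A clean one: if $f\in L^2(\PPP)$, $|f|=1$ a.e., satisfies $f\circ T=\lambda f$, define $g$ on the torus $[0,1)^2$ by $g(\bfx)=\prod_{\sigma=1}^s f(\bfx_\sigma)$, the product over the $s$ preimages $\bfx_1,\dots,\bfx_s$ of $\bfx$ under the natural projection $\PPP\to[0,1)^2$. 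Since $T$ permutes those preimage sets compatibly with the base rotation $T_0$, one gets $g\circ T_0=\lambda^s g$, so $\lambda^s$ is an eigenvalue of the torus rotation, hence of the form $e^{2\pi\ii(mv_1+nv_2)}$ with $m,n\in\Zz$, and $\alpha\in\frac{1}{s}(\Zz v_1+\Zz v_2+\Zz)\subset\Qq v_1+\Qq v_2+\Qq$. Second, the version of the Wiener--Wintner theorem you cite (Robinson's uniform topological version) is stated for a continuous map of a compact metric space, whereas $T$ on $\PPP$ is only piecewise continuous, discontinuous across the glued edges and undefined at the singular points --- exactly like an interval exchange. You cannot invoke it as a black box. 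The van der Corput alternative you sketch at the end is the right fix: the differenced correlations $c_h$ involve the Riemann-integrable functions $g\circ T^h\cdot\overline{g}$, and hypothesis (i), in the form ``every non-singular orbit equidistributes for Riemann-integrable test functions,'' suffices to evaluate them; the twisted Ces\`aro average of $c_h$ then vanishes by the spectral observation above. I would promote that alternative to the main argument and drop the appeal to Wiener--Wintner. With those two repairs your proof is correct, is noticeably shorter than the paper's, and still generalizes to the stepping-up from $d$ to $d+1$ dimensions mentioned after the theorem's statement.
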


\begin{theorem}\label{thm5}
Under the hypotheses of Theorem~\ref{thm4}, the following statements are equivalent:

\emph{(i)}
The $\bfv$-shift on $\PPP$ is ergodic.

\emph{(ii)}
For any $w\in\Rr$ such that $\bfw=(\bfv,w)\in\Rr^3$ is the step vector of a Kronecker sequence on~$\MMM$,
the $\bfw$-shift in $\MMM$ is ergodic.
\end{theorem}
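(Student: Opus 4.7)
The transformation $T_\bfw$ on $\MMM = \PPP \times [0,1)$ is nothing but the direct product $T_\bfv \times R_w$, where $R_w: y \mapsto y+w \pmod 1$ is rotation by $w$ on the unit circle. So the equivalence reduces to a standard spectral question about ergodicity of direct products, and I would split the plan into the two implications.

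The direction $(ii) \Rightarrow (i)$ is immediate: any $T_\bfv$-invariant $g$ on $\PPP$ produces the $T_\bfw$-invariant lift $f(\bfx,y)=g(\bfx)$ on $\MMM$, which is a.e.\ constant by hypothesis (ii), so $g$ is constant. For $(i) \Rightarrow (ii)$, the plan is to use Fourier expansion in the last coordinate: writing any $T_\bfw$-invariant $f \in L^2(\MMM)$ as
\begin{equation*}
f(\bfx,y) = \sum_{n \in \Zz} f_n(\bfx)\, \ee^{2\pi \ii n y},
\end{equation*}
the invariance condition $f \circ T_\bfw = f$ translates into $f_n \circ T_\bfv = \ee^{-2\pi \ii n w} f_n$ for each $n \in \Zz$. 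Ergodicity of $T_\bfv$ forces $f_0$ to be constant, and makes $|f_n|$ constant for every $n \ne 0$; hence, after normalization, any nonzero $f_n$ would be an $L^\infty$ eigenfunction of $T_\bfv$ with eigenvalue $\ee^{-2\pi \ii n w}$.

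The main obstacle is ruling out such eigenfunctions. The spectral claim I would establish is that the point spectrum of $T_\bfv$ on the polysquare surface $\PPP$ is contained in the countable subgroup $\{\ee^{2\pi \ii \theta} : \theta \in \Qq v_1 + \Qq v_2 + \Qq\}$ of the unit circle. Granting this, if $\ee^{-2\pi \ii n w}$ were an eigenvalue for some $n \ne 0$, one would have $-nw \equiv a v_1 + b v_2 + c \pmod 1$ with $a, b, c \in \Qq$ and $n \in \Zz \setminus \{0\}$, contradicting the $\Qq$-linear independence of $v_1, v_2, w, 1$ built into the Kronecker hypothesis on $\bfw$. Consequently $f_n \equiv 0$ for every $n \ne 0$, and $f$ is a.e.\ equal to the constant $f_0$, proving ergodicity of $T_\bfw$.

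To establish the spectral claim, I would exploit that $\PPP$ is a finite translation cover of the standard torus $\Tt^2$ of degree $s$, branched only over the images of the corners, on which $T_\bfv$ is ordinary toral rotation by $\bfv$. Passing to the Galois closure of this cover and decomposing $L^2$ according to the finite deck group $G$ reduces the spectral problem to one for toral rotations twisted by characters of $G$, yielding only eigenvalues of the form $\ee^{2\pi \ii(m_1 v_1 + m_2 v_2)}\zeta$ with $\zeta^{|G|} = 1$. This lies inside the claimed countable group. Making this rigorous — in particular dealing with the branch points, and with the possibility that the monodromy representation is non-abelian — is where the technical work lies.
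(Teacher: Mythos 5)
You take a genuinely different route from the paper: the paper gives no separate proof of Theorem~\ref{thm5} --- its implication (i)~$\Rightarrow$~(ii) is precisely the ergodicity argument inside the proof of Theorem~\ref{thm4}, carried out by approximating a putative invariant set by boxes (Lemma~\ref{lem32}), a symmetric-difference lower bound on fibres over congruent points (Lemma~\ref{lem33}) and simultaneous approximation (Lemma~\ref{lem34}) --- whereas you argue spectrally. Your reduction is correct as far as it goes: (ii)~$\Rightarrow$~(i) follows by lifting invariant functions (noting that admissible $w$ exist, since only countably many values are excluded), and the Fourier expansion in the circle coordinate correctly reduces (i)~$\Rightarrow$~(ii) to showing that $\ee^{-2\pi\ii nw}$, $n\ne0$, is never an eigenvalue of $\T^*_\bfv$ on~$\PPP$.

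The gap is that this last assertion, the heart of the matter, is not proved: you explicitly defer the treatment of branch points and of non-abelian monodromy, and the Galois-closure/character decomposition you sketch does not by itself cover non-abelian deck groups, where the nontrivial isotypic pieces carry matrix cocycles rather than characters. Moreover the eigenvalue form you announce is not quite right: already on the $2\times1$ polysquare torus the eigenvalues of $\T^*_\bfv$ are $\ee^{2\pi\ii(m_1v_1/2+m_2v_2)}$, which are not of the form $\ee^{2\pi\ii(m_1v_1+m_2v_2)}\zeta$ with integer $m_i$ and $\zeta$ a root of unity; one must allow rational coefficients with denominator dividing the degree. Fortunately the containment you actually use is true and has a short proof needing none of the covering machinery. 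Let $\pi:\PPP\to[0,1)^2$ be reduction modulo~$1$; for almost every $x\in[0,1)^2$ the fibre $\pi^{-1}(x)$ consists of exactly $s$ points, one in each atomic square, and $\T^*_\bfv$ maps it bijectively onto $\pi^{-1}(\T_\bfv x)$. If $F\circ\T^*_\bfv=\ee^{2\pi\ii\theta}F$ with $F\not\equiv0$, normalise so that $|F|=1$ almost everywhere (possible by ergodicity of $\T^*_\bfv$, your hypothesis (i)) and set $G(x)=\prod_{p\in\pi^{-1}(x)}F(p)$; then $G$ is measurable, $|G|=1$ and $G\circ\T_\bfv=\ee^{2\pi\ii s\theta}G$, so, since measurable eigenfunctions of the ergodic rotation $\T_\bfv$ are characters, $s\theta\equiv m_1v_1+m_2v_2\pmod 1$ for some integers $m_1,m_2$. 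Applied to $\theta=-nw$ this yields $nsw+m_1v_1+m_2v_2\in\Zz$ with $ns\ne0$, contradicting the $\Qq$-linear independence of $v_1,v_2,w,1$ directly. With this lemma in place of your spectral claim your argument is complete, and it is considerably shorter than the paper's measure-theoretic proof, though that proof is designed to generalise to the higher-dimensional settings the authors have in view.
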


The continuous versions of these results concern the distribution of half-infinite geodesics with Kronecker direction
on a finite polysquare translation surface $\PPP$ and the analogous problem
in the associated polycube translation $3$-manifold~$\MMM$.

\begin{theorem}\label{thm6}
Suppose that $\PPP$ is a finite polysquare translation surface,
and that $\MMM=\PPP\times[0,1)$ denotes the associated polycube translation $3$-manifold
which is the cartesian product of $\PPP$ and the unit torus $[0,1)$.
Suppose also that $\bfv\in\Rr^2$ is a Kronecker direction.
Then the following statements are equivalent:

\emph{(i)}
Every half-infinite geodesic with direction $\bfv$ on $\PPP$ is uniformly distributed.

\emph{(ii)}
For any $w\in\Rr$ such that $\bfw=(\bfv,w)\in\Rr^3$ is a Kronecker direction,
every half-infinite geodesic with direction $\bfw$ in $\MMM$ is uniformly distributed.
\end{theorem}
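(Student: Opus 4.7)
My plan is to prove the equivalence in two directions, with (ii) $\Rightarrow$ (i) being an essentially trivial projection argument and (i) $\Rightarrow$ (ii) being the substantive direction, handled by Weyl's criterion combined with a Koksma-type inequality.

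For (ii) $\Rightarrow$ (i), I would project. Given a half-infinite $\bfv$-geodesic $\LLL_\PPP$ on $\PPP$ starting at $\bfp_0$, pick any $z_0\in[0,1)$ and consider the half-infinite $\bfw$-geodesic $\LLL$ on $\MMM$ starting at $(\bfp_0,z_0)$; the canonical projection $\pi\colon\MMM\to\PPP$ sends $\LLL$ to a reparameterization of $\LLL_\PPP$ by the constant factor $|\bfv|/|\bfw|$. For any convex set $S$ in an atomic square of $\PPP$, the preimage $\pi^{-1}(S)=S\times[0,1)$ is a union of convex sets in atomic cubes of $\MMM$ with $\lambda_3(\pi^{-1}(S))=\lambda_2(S)$, and $\LLL(t)\in\pi^{-1}(S)$ if and only if the $\PPP$-projection of $\LLL(t)$ lies in $S$. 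Hypothesis (ii) then yields uniformity of $\LLL_\PPP$ on $\PPP$ after the elementary adjustment for the parameterization ratio.

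For (i) $\Rightarrow$ (ii), I would apply Weyl's criterion on $\MMM=\PPP\times[0,1)$ in the Fourier basis along the $z$-variable. By Stone--Weierstrass it suffices to verify, for every $G\in C(\PPP)$ and every $k\in\Zz$, that the time average of $F(p,z):=G(p)\ee^{2\pi\ii kz}$ along any half-infinite $\bfw$-geodesic $\LLL$ matches the space average. The case $k=0$ reduces to uniform distribution of the $\PPP$-projection of $\LLL$, which is a $\bfv$-geodesic on $\PPP$, and this is exactly hypothesis (i). For $k\neq0$, parameterizing by $\PPP$-arc length and writing $\omega=w/|\bfv|$, the task becomes
\begin{displaymath}
\frac{1}{T}\int_0^TG(\LLL_\PPP(s))\ee^{2\pi\ii k\omega s}\,\dd s\to0\quad\mbox{as $T\to\infty$}.
\end{displaymath}

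The main obstacle is this twisted-average vanishing for $k\ne0$. I would adapt the Koksma-inequality strategy used in the proof of Theorem~\ref{thm1} (cf.\ \cite[Section~3.4.1]{BDY20a} and \cite[Section~3.6]{BCY24}) to the present product setting. Approximating $G$ by indicator functions of convex subsets of atomic squares, each twisted integral decomposes as a sum of boundary terms $(2\pi\ii k\omega)^{-1}(\ee^{2\pi\ii k\omega b_i}-\ee^{2\pi\ii k\omega a_i})$ over the visit-intervals $[a_i,b_i]$ of $\LLL_\PPP$ to the subset. The Kronecker hypothesis on $\bfw$ makes $\omega$ rationally independent of the parameters driving the $\bfv$-geodesic flow, so the phases $\ee^{2\pi\ii k\omega a_i}$, $\ee^{2\pi\ii k\omega b_i}$ equidistribute modulo~$1$ and cancel, producing the required vanishing. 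Alternatively, one may discretize $\LLL$ at a carefully chosen time step so that the sampled points form a Kronecker sequence on $\MMM$; uniformity of its $\PPP$-projection (obtained from (i) via a Koksma-type discretization step on $\PPP$) combined with Theorem~\ref{thm4} then yields uniformity of the sequence on $\MMM$, and one passes back to the geodesic by the same inequality. The delicate point in either route is that the visit-intervals of $\LLL_\PPP$ on a general polysquare surface may be highly irregular because of the edge identifications, so the full strength of the Kronecker condition on $\bfw$ (not merely on $\bfv$) is essential to force the required equidistribution of boundary phases.
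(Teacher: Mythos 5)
The direction (ii) $\Rightarrow$ (i) of your argument is fine, but the substantive direction (i) $\Rightarrow$ (ii) has a genuine gap at exactly the point where all the difficulty lives. After the Fourier reduction, everything hinges on the vanishing of the twisted averages $\frac{1}{T}\int_0^T G(\LLL_\PPP(s))\,\ee^{2\pi\ii k\omega s}\,\dd s$ for $k\ne0$, and your justification --- that the Kronecker hypothesis on $\bfw$ makes the boundary phases $\ee^{2\pi\ii k\omega a_i}$, $\ee^{2\pi\ii k\omega b_i}$ ``equidistribute modulo~$1$ and cancel'' --- is an assertion, not a proof. Hypothesis (i) is purely qualitative: it gives the density of visits of $\LLL_\PPP$ to sets, but no control whatsoever on the sequence of visit times $a_i,b_i$, and rational independence of $w$ from $v_1,v_2$ does not by itself force $\{\omega a_i\}$ to equidistribute, let alone produce the cancellation you need. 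Proving that the fiber rotation decorrelates from the base geodesic flow is precisely equivalent to the unique ergodicity of the $\bfw$-flow on $\MMM$, i.e.\ it \emph{is} statement (ii); a Koksma-type inequality converts discrepancy bounds into bounds on integrals of bounded-variation functions, but it cannot manufacture this decorrelation from (i). Your alternative route has a similar hole: to invoke Theorem~\ref{thm4} you need uniform distribution of $\bfv$-Kronecker \emph{sequences} on $\PPP$, but (i) is a statement about \emph{geodesics} on $\PPP$, and passing from continuous to discrete uniformity on the same surface is not supplied by Theorem~\ref{thm1} (which relates sequences on $\PPP$ to geodesics on $\MMM$) and is false in general without the full Kronecker hypothesis on the step vector; if you instead fix this by quoting Theorem~\ref{thm2}, hypothesis (i) plays no role and you are no longer proving the stated equivalence.

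For comparison, the paper does not prove Theorem~\ref{thm6} by any such direct argument in dimensions $2$ and $3$: as its Remark notes, (i) holds unconditionally by the Gutkin--Veech theorem and (ii) holds unconditionally by Theorem~\ref{thm3}, so the equivalence is immediate, and the genuinely new work is the discrete Theorem~\ref{thm4}, whose proof of (i) $\Rightarrow$ (ii) requires the full machinery of Lemmas \ref{lem32}--\ref{lem34} (box approximation, the shift $u_0^*$, the simultaneous approximation lemma) rather than a phase-cancellation claim. If you want a self-contained proof of the implication (i) $\Rightarrow$ (ii) --- which is what the $d\ge3$ analogues would require --- you would need to supply an argument of that type, or otherwise establish unique ergodicity of the skew product directly; the step you have left unproved is the theorem itself.
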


\begin{theorem}\label{thm7}
Under the hypotheses of Theorem~\ref{thm4}, the following statements are equivalent:

\emph{(i)}
Geodesic flow with direction $\bfv$ on $\PPP$ is ergodic.

\emph{(ii)}
For any $w\in\Rr$ such that $\bfw=(\bfv,w)\in\Rr^3$ is a Kronecker direction,
geodesic flow with direction $\bfw$ in $\MMM$ is ergodic.
\end{theorem}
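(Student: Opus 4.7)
The plan is to exploit the product structure of $\MMM=\PPP\times[0,1)$. After an arclength reparametrisation, the geodesic flow $\Phi_t$ on $\MMM$ in direction $\bfw=(\bfv,w)$ takes the form
\[
\Phi_t(\bfx,z)=\bigl(\phi_t\bfx,\,z+\omega t\bmod 1\bigr),
\]
where $\phi_t$ is the $\bfv$-geodesic flow on $\PPP$, run at the rescaled speed $|\bfv|/|\bfw|$, and $\omega=w/|\bfw|$. Under the natural identification $L^2(\MMM)=L^2(\PPP)\otimes L^2([0,1))$, the Koopman operator $U_t\colon F\mapsto F\circ\Phi_t$ diagonalises in the $z$-direction via Fourier series, which will reduce the ergodicity question to a spectral condition on $\phi_t$.

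The direction (ii)$\Rightarrow$(i) is immediate from this setup. If $A\subset\PPP$ is $\phi_t$-invariant, then $A\times[0,1)\subset\MMM$ is $\Phi_t$-invariant and has the same normalised Lebesgue measure as $A$, so ergodicity of $\Phi_t$ forces $\lambda_2(A)/\lambda_2(\PPP)\in\{0,1\}$.

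For the converse (i)$\Rightarrow$(ii), I would take an arbitrary $\Phi_t$-invariant function $F\in L^2(\MMM)$ and expand
\[
F(\bfx,z)=\sum_{n\in\Zz}F_n(\bfx)\,\ee^{2\pi\ii nz},\qquad F_n\in L^2(\PPP).
\]
Matching Fourier coefficients in the identity $F\circ\Phi_t=F$ gives
\[
F_n(\phi_t\bfx)=\ee^{-2\pi\ii n\omega t}F_n(\bfx)
\]
for every $t\in\Rr$ and almost every $\bfx\in\PPP$. For $n=0$ the coefficient $F_0$ is $\phi_t$-invariant, hence constant a.e.\ by (i). For $n\ne 0$ the coefficient $F_n$ would have to be a genuine $L^2$-eigenfunction of $\phi_t$ with the nontrivial continuous eigenvalue character $t\mapsto\ee^{-2\pi\ii n\omega t}$; if we can rule all such $F_n$ out, then $F\equiv F_0$ a.e., which is exactly the ergodicity of $\Phi_t$.

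The main obstacle is this spectral exclusion, and it is precisely here that the Kronecker-direction hypothesis on $\bfw$ must be used. For $\PPP=[0,1)^2$ the point spectrum of $\phi_t$ is the countable set $\{2\pi(k_1v_1+k_2v_2)/|\bfw|:(k_1,k_2)\in\Zz^2\}$, and the rational independence of $v_1,v_2,w$ packaged in the Kronecker hypothesis is exactly what keeps $n\omega$ out of this set for every $n\ne 0$. For a general finite polysquare translation surface the edge identifications can contribute extra eigenvalues; but $\PPP$ is a finite branched translation cover of $\Tt^2$ (the origami picture), so decomposing $L^2(\PPP)$ into isotypic components under the deck group and applying the torus analysis on each piece yields an explicit countable list of candidate eigenvalues, against which the Kronecker condition on $\bfw$ can again be tested. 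Once this spectral bookkeeping is carried out, the Fourier argument above closes the proof.
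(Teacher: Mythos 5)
Your reduction of (i)$\Rightarrow$(ii) to a spectral exclusion is a genuinely different route from the paper: the paper proves its step-up statements measure-theoretically (approximation of a putative invariant set by axis-parallel boxes, Lemma~\ref{lem33} on fibre sets and the simultaneous-approximation Lemma~\ref{lem34}), and for Theorem~\ref{thm7} in this particular dimension it simply observes that (i) and (ii) are both already known, by the Gutkin--Veech theorem and Theorem~\ref{thm3}. Your Fourier set-up and the easy direction (ii)$\Rightarrow$(i) are fine. The genuine gap is exactly at the step you flag as the main obstacle: the claim that one can decompose $L^2(\PPP)$ into isotypic components under the deck group and read off the possible eigenvalues of $\phi_t$ by ``torus analysis on each piece'' does not stand. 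A finite polysquare (square-tiled) cover $p\colon\PPP\to[0,1)^2$ need not be a normal cover, so its deck group can be trivial and yields no decomposition; and even for a normal cover, the restriction of $\phi_t$ to a nontrivial isotypic component is a genuine finite group extension of the torus flow, whose point spectrum is governed by the solvability of measurable cohomological equations and is not obtained by applying torus harmonics componentwise. So the decisive assertion --- that $n\omega$ is never an $L^2$-eigenvalue of $\phi_t$ for $n\ne0$ --- is asserted rather than proved.

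The gap can be closed, and the fix uses hypothesis (i) a second time, not just for the $n=0$ mode. Suppose $F_n\ne0$ for some $n\ne0$, so that $F_n\circ\phi_t=\ee^{-2\pi\ii n\omega t}F_n$. Ergodicity of $\phi_t$ forces $|F_n|$ to be a nonzero constant. For almost every $\bar\bfx\in[0,1)^2$ the fibre $p^{-1}(\bar\bfx)$ has exactly $s$ points and is permuted onto the fibre over the translated point by $\phi_t$ (orbits hitting singularities form a null set), so $G(\bar\bfx)=\prod_{\bfx\in p^{-1}(\bar\bfx)}F_n(\bfx)$ is a bounded, nowhere vanishing measurable eigenfunction of the Kronecker flow on $[0,1)^2$ whose eigenvalue corresponds to frequency $ns\omega$; since the eigenvalues of that flow come only from the torus characters, this forces $nsw\in\Zz v_1+\Zz v_2$, contradicting the rational independence contained in the Kronecker hypothesis on $\bfw$. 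With this replacement your argument closes, and, like the paper's method, it extends to the $d\to d+1$ analogues; the trade-off is that the paper's approach needs no spectral theory at all, while yours, once repaired, isolates cleanly where the Kronecker condition on $w$ enters.
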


\begin{remark}
For Theorems \ref{thm6} and~\ref{thm7}, we already know that (i) and (ii) hold,
in view of the Gutkin--Veech theorem and Theorem~\ref{thm3},
so only the analogues in $d$ and $d+1$ dimensions for integers $d\ge3$ are new.
\end{remark}

We concentrate our efforts on establishing Theorem~\ref{thm4}.
That (ii) implies (i) is almost trivial, by projection from $\MMM$ to~$\PPP$.
The converse is considerably harder.

Let $\bfw=(v_1,v_2,w)\in\Rr^3$, where $v_1,v_2,w,1$ are linearly independent over~$\Qq$,
and consider the $\bfw$-shift $\T^*_\bfw=\T^*_\bfw(\MMM)$ of geodesic flow in direction $\bfw$ in $\MMM=\PPP\times[0,1)$.
We can consider two projections of~$\T^*_\bfw$.

On the one hand, we can project the $\bfw$-shift $\T^*_\bfw$ to the unit torus $[0,1)^3$,
simply by taking every coordinate modulo~$1$,
leading to the $\bfw$-shift $\T_\bfw=\T_\bfw([0,1)^3)$ in the unit torus $[0,1)^3$
which is ergodic.

On the other hand, we can project the $\bfw$-shift $\T^*_\bfw$ to the polysquare translation surface~$\PPP$,
simply by dropping reference to the $3$-rd coordinates throughout,
leading to the $\bfv$-shift $\T^*_\bfv=\T^*_\bfv(\PPP)$ on~$\PPP$.

\begin{lemma}\label{lem31}
Suppose that the condition (i) in Theorem~\ref{thm4} holds.
Then the $\bfv$-shift $\T^*_\bfv=\T^*_\bfv(\PPP)$ on $\PPP$ is ergodic.
\end{lemma}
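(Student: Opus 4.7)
The plan is to derive ergodicity of the $\bfv$-shift on $(\PPP,\mu)$, with $\mu=\lambda_2/s$ the normalized Lebesgue probability, by playing condition~(i) of Theorem~\ref{thm4} against Birkhoff's ergodic theorem. Condition~(i) gives convergence of the Birkhoff averages $J^{-1}\sum_{j<J}\chi_B(\bfv_0+j\bfv)$ to $\mu(B)$ for \emph{every} non-pathological starting point $\bfv_0$, whereas Birkhoff gives convergence of the same averages, $\mu$-almost everywhere, to the conditional expectation $\Ee_\mu[\chi_B\mid\III]$, with $\III$ the $\sigma$-algebra of $\T^*_\bfv$-invariant Borel sets. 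Since the pathological set has Lebesgue measure zero, the two limits agree $\mu$-almost everywhere, forcing $\Ee_\mu[\chi_B\mid\III]=\mu(B)$ for a rich enough family of test sets $B$. This amounts to unique ergodicity and is well known to imply ergodicity.

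Carrying this out, I would first fix a finite union of axis-parallel rectangles $B\subseteq\PPP$ and extend condition~(i), as formulated for convex pieces of atomic squares in the sketch of Theorem~\ref{thm1}, to $B$ by the standard sandwich of $\chi_B$ between continuous functions. This yields
\begin{equation*}
\lim_{J\to\infty}\frac{1}{J}\sum_{j=0}^{J-1}\chi_B(\bfv_0+j\bfv)=\mu(B)
\quad\text{for every non-pathological }\bfv_0\in\PPP,
\end{equation*}
whence $\Ee_\mu[\chi_B\mid\III]=\mu(B)$ $\mu$-a.e.\ by Birkhoff. Next, for any $A\in\III$, the $\III$-measurability of $\chi_A$ gives
\begin{equation*}
\mu(A\cap B)=\int\chi_A\,\Ee_\mu[\chi_B\mid\III]\,\dd\mu=\mu(A)\,\mu(B).
\end{equation*}
Finally, approximating $A$ in $L^1(\mu)$ by a sequence $B_n$ of finite unions of axis-parallel rectangles with $\mu(A\triangle B_n)\to 0$ and passing to the limit in $\mu(A\cap B_n)=\mu(A)\mu(B_n)$ delivers $\mu(A)=\mu(A)^2$, so $\mu(A)\in\{0,1\}$.

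I do not expect a serious obstacle. The only mildly delicate point is the first step, namely upgrading the uniform-distribution hypothesis from convex subsets of atomic squares to finite unions of axis-parallel rectangles; this is a routine Jordan-content approximation, entirely analogous to the one implicit in the sketch of Theorem~\ref{thm1}. Everything else --- the $L^1$-approximability of Borel sets by finite unions of rectangles, and the Birkhoff step --- is textbook, so the core of the argument really is the orbit-vs-almost-everywhere reconciliation described above.
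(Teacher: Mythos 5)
Your argument is correct, but it is organized quite differently from the paper's. The paper argues by contradiction: assuming the $\bfv$-shift is not ergodic, it uses the ergodicity of the projected shift $\T_\bfv$ on $[0,1)^2$ to decompose $\PPP$ into finitely many minimal $\T^*_\bfv$-invariant components $M_1,\ldots,M_k$ of integer measure, picks a Jordan measurable test set $A$ with $\lambda_2(A\cap M_1)<\tfrac1{10}$ and $\lambda_2(A\cap M_2)>\tfrac12\lambda_2(M_2)$, and applies Birkhoff to the (ergodic) restrictions of the shift to $M_1$ and $M_2$ to produce two different visiting densities, contradicting the common density $\lambda_2(A)/s$ forced by condition (i). You bypass the decomposition into minimal components entirely: a single global application of Birkhoff, reconciled almost everywhere with the everywhere-convergence given by (i) (the pathological set being null), yields $\Ee_\mu[\chi_B\mid\III]=\mu(B)$ for finite unions of axis-parallel rectangles $B$, whence $\mu(A\cap B)=\mu(A)\mu(B)$ for every invariant $A$, and the $L^1$-approximation $B_n\to A$ gives $\mu(A)=\mu(A)^2$. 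This is cleaner in that it needs neither the existence of the minimal decomposition with integer measures (which the paper only sketches, via the torus projection) nor the ergodicity of the restricted shifts, and it works verbatim for any measure-preserving system in which almost every orbit equidistributes; the paper's route, on the other hand, exposes the component structure of a hypothetical non-ergodic shift, which is the picture reused later in the proof of Theorem~\ref{thm4}. Two small points: your reduction from convex subsets of atomic squares to finite unions of axis-parallel rectangles is even easier than a sandwich argument, since each rectangle splits along integer lines into convex pieces inside atomic squares; and your aside that the identity $\Ee_\mu[\chi_B\mid\III]=\mu(B)$ ``amounts to unique ergodicity'' overstates it --- it is exactly ergodicity of $\mu$ that your computation delivers, and nothing about other invariant measures is needed or proved --- but since the executed argument never uses unique ergodicity, this is only a matter of phrasing.
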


\begin{proof}[Sketch of proof]
If a Kronecker sequence on $\PPP$ becomes undefined
after finitely many terms, then the supporting geodesic $\LLL(t)$, $t\ge0$, with starting point $\LLL(0)=\bfv_0$
and direction $\bfv$ on $\PPP$ hits a singular point of~$\PPP$.
The collection of singular points of $\PPP$ clearly has $2$-dimensional Lebesgue measure~$0$.
Thus the collection of starting points $\bfv_0$ that lead a Kronecker sequence to be undefined
after finitely many terms also has $2$-dimensional Lebesgue measure~$0$.
Thus almost every point $\bfv_0\in\PPP$ gives rise to a half-infinite Kronecker sequence.
Thus the condition (i) in Theorem~\ref{thm4} guarantees that for almost every starting point~$\bfv_0$,
the half-infinite Kronecker sequence $\bfv_0+j\bfv$, $j=0,1,2,3,\ldots,$ on $\PPP$ is uniformly distributed.
Hence the visiting density of the sequence in any Jordan measurable set $A$ is given by $\lambda_2(A)/s$.

Suppose, on the contrary, that the $\bfv$-shift $\T^*_\bfv$ on $\PPP$ is not ergodic.
Then there is a partition $\PPP=U_1\cup U_2$, where the subsets $U_1,U_2\subset\PPP$ are $\T^*_\bfv$-invariant,
and satisfy $\lambda_2(U_1)>0$ and $\lambda_2(U_2)>0$.
Moreover, since the projection of $\T^*_\bfv$ to the unit torus $[0,1)^2$, simply by taking every coordinate modulo~$1$,
leads to the $\bfv$-shift $\T_\bfv=\T_\bfv([0,1)^2)$ on the unit torus $[0,1)^2$ which is ergodic,
there is a decomposition $\PPP=M_1\cup\ldots\cup M_k$ for some integer $k$ satisfying $2\le k\le s$,
where $s$ is the number of atomic squares in $\PPP$, such that for every $i=1,\ldots,k$,
the set $M_i$ is $\T^*_\bfv$-invariant and does not contain a proper $\T^*_\bfv$-invariant subset,
and $\lambda_2(M_i)$ is a positive integer.
We say that the subsets $M_1,\ldots,M_k$ are \textit{minimal}.

We can find a Jordan measurable subset $A\subset\PPP$ such that
\begin{displaymath}
\lambda_2(A\cap M_1)<\frac{1}{10}
\quad\mbox{and}\quad
\lambda_2(A\cap M_2)>\frac{\lambda_2(M_2)}{2}.
\end{displaymath}
Next we apply the Birkhoff ergodic theorem to both $M_1$ and~$M_2$.
The restriction of the $\bfv$-shift $\T_\bfv^*$ to $M_1$ and to $M_2$ are both ergodic.
In each case, the simplest form of the ergodic theorem says that the \textit{time average}
is equal to the \textit{space average}.
Then for almost every starting point $\bfv_0\in M_1$, the visiting density of the Kronecker sequence
$\bfv_0+j\bfv$, $j=0,1,2,3,\ldots,$ on $\PPP$ in the set $A$ is equal to the relative measure
\begin{displaymath}
\frac{\lambda_2(A\cap M_1)}{\lambda_2(M_1)}<\frac{1}{10},
\end{displaymath}
while for almost every starting point $\bfv_0\in M_2$, the visiting density of the Kronecker sequence
$\bfv_0+j\bfv$, $j=0,1,2,3,\ldots,$ on $\PPP$ in the set $A$ is equal to the relative measure
\begin{displaymath}
\frac{\lambda_2(A\cap M_2)}{\lambda_2(M_2)}>\frac{1}{2}.
\end{displaymath}
Thus at least one of these is different from $\lambda_2(A)/s$, leading to a contradiction.
\end{proof}

\begin{proof}[Proof of Theorem~\ref{thm4}]
((i) $\Rightarrow$ (ii))
We need to prove that the $\bfw$-shift $\T^*_\bfw$ in $\MMM$ is ergodic.
Suppose on the contrary that this is not the case.
Then there exists a non-trivial partition $\MMM=\WWW\cup\SSS$ into a union of two $\T^*_\bfw$-invariant subsets
$\WWW,\SSS\subset\MMM$, called White and Silver, say, each with integral measure and such that
\begin{displaymath}
1\le\lambda_3(\WWW),\lambda_3(\SSS)\le s-1,
\end{displaymath}
where $s$ is the number of atomic cubes in $\MMM$ and $\lambda_3$ denotes $3$-dimensional Lebesgue measure.

Let $\YYY_1,\ldots,\YYY_s$ denote the atomic cubes of~$\MMM$,
and consider the projection of $\MMM$ to the unit torus $[0,1)^3$.
Then for any point $P\in[0,1)^3$, there are precisely $s$ points
\begin{displaymath}
P_1\in\YYY_1,
\quad
\ldots,
\quad
P_s\in\YYY_s
\end{displaymath}
that have the same image $P$ under this projection.
Let
\begin{displaymath}
f_\WWW(P)=\vert\{P_1,\ldots,P_s\}\cap\WWW\vert
\quad\mbox{and}\quad
f_\SSS(P)=\vert\{P_1,\ldots,P_s\}\cap\SSS\vert.
\end{displaymath}
Then $f_\WWW$ and $f_\SSS$ are positive integer valued functions defined on $[0,1)^3$ such that
\begin{displaymath}
f_\WWW(P)+f_\SSS(P)=s
\quad\mbox{for almost every $P\in[0,1)^3$}.
\end{displaymath}
The $\bfw$-shift $\T_\bfw$ in the unit torus $[0,1)^3$ is ergodic.
Since the functions $f_\WWW$ and $f_\SSS$ are $\T_\bfw$-invariant,
it follows from the Birkhoff ergodic theorem that they are constant almost everywhere in $[0,1)^3$.
Then
\begin{equation}\label{eq3.1}
f_\WWW+f_\SSS=s
\quad\mbox{and}\quad
1\le f_\WWW,f_\SSS\le s-1.
\end{equation}

Let $\chi_\WWW$ denote the characteristic function of $\WWW$ in~$\MMM$.
For every $\bfs\in\MMM$, write $\bfs=(\bfx,z)$, where $\bfx\in\PPP$ and $z\in[0,1)$.
The well known Fubini theorem implies that
\begin{displaymath}
\int_\MMM\chi_\WWW(\bfs)\,\dd\bfs
=\int_\PPP\left(\int_{[0,1)}\chi_\WWW(\bfx;z)\,\dd z\right)\dd\lambda_2,
\end{displaymath}
where the inner integral
\textcolor{white}{xxxxxxxxxxxxxxxxxxxxxxxxxxxxxx}
\begin{displaymath}
\psi(\bfx)=\int_{[0,1)}\chi_\WWW(\bfx;z)\,\dd z
\end{displaymath}
is well defined in the sense of Lebesgue for almost every $\bfx\in\PPP$.

Consider now the projection of $\T^*_\bfw$ to the polysquare translation surface~$\PPP$,
resulting in the ergodic $\bfv$-shift $\T^*_\bfv$ on~$\PPP$.
Since the function $\psi(\bfx)$ is $\T^*_\bfv$-invariant, it follows from ergodicity that
it has constant value almost everywhere on~$\PPP$.
Thus it follows from \eqref{eq3.1} that
\textcolor{white}{xxxxxxxxxxxxxxxxxxxxxxxxxxxxxx}
\begin{equation}\label{eq3.2}
\frac{1}{s}\le\psi=\frac{f_\WWW}{s}\le\frac{s-1}{s}
\end{equation}
almost everywhere on~$\PPP$.

Since the invariant subset $\WWW\subset\MMM$ is measurable, it follows that
for every $\eps_1>0$, there exists a finite set of $3$-dimensional
axis-parallel rectangular boxes such that their union $\BBB=\BBB(\WWW;\eps_1)$
has the property that the symmetric difference
\begin{displaymath}
\WWW\bigtriangleup\BBB=(\WWW\setminus\BBB)\cup(\BBB\setminus\WWW)
\end{displaymath}
has measure
\textcolor{white}{xxxxxxxxxxxxxxxxxxxxxxxxxxxxxx}
\begin{equation}\label{eq3.3}
\lambda_3(\WWW\bigtriangleup\BBB)<\eps_1.
\end{equation}

We need the following simple technical result.

Let $\chi_\BBB$ denote the characteristic function of $\BBB$ in~$\MMM$.

\begin{lemma}\label{lem32}
Let $\BBB$ be a finite union of axis-parallel rectangular boxes satisfying \eqref{eq3.3}.
For every $\eps_2>0$, there exists $\eps_3>0$ and a finite set of disjoint
axis-parallel rectangular boxes such that their union $\BBB^*=\BBB^*(\BBB;\eps_2;\eps_3)$ satisfies
the following conditions:

\emph{(i)}
The measure $\lambda_3(\MMM\setminus\BBB^*)<\eps_2$.

\emph{(ii)}
We have $\chi_\BBB(\bfs')=\chi_\BBB(\bfs'')$ if the points $\bfs',\bfs''\in\MMM$ belong to the same axis-parallel
rectangular box in the disjoint union~$\BBB^*$.

\emph{(iii)}
The side lengths of each axis-parallel rectangular box in the disjoint union $\BBB^*$ is greater than~$\eps_3$.

\emph{(iv)}
Let $s$ denote the number of atomic squares of~$\PPP$.
Then
\begin{displaymath}
\lambda_2\left(\left\{\bfx\in\PPP:\lambda_1(\{z\in[0,1):(\bfx,z)\in\BBB^*\})>1-\eps_2^{1/2}\right\}\right)>s-\eps_2^{1/2}.
\end{displaymath}
\end{lemma}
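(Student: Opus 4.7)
The plan is to construct $\BBB^*$ as a common refinement of $\BBB$ by axis-parallel grids within each atomic cube, and then to discard the cells that are too thin in some coordinate direction. First, writing $\BBB$ as a finite union $R_1\cup\ldots\cup R_m$ of axis-parallel boxes, I would subdivide if necessary along the atomic cube boundaries of $\MMM$ so that each piece lies inside a single atomic cube; this keeps the collection finite. Then, inside each atomic cube $\YYY_j$, I would take the hyperplanes determined by the $x$-, $y$-, and $z$-coordinates of all faces of the boxes $R_i\cap\YYY_j$. These hyperplanes cut $\YYY_j$ into finitely many open axis-parallel cells $Q_{j,\alpha}$, and on each such cell $\chi_\BBB$ is identically $0$ or $1$, since no face of any $R_i$ passes through its interior. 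Any subcollection of such cells automatically satisfies condition (ii).

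For a parameter $\eps_3>0$ to be chosen, $\BBB^*$ would be the disjoint union of those cells $Q_{j,\alpha}$ all three of whose side-lengths exceed $\eps_3$, which makes (ii) and (iii) immediate. For (i), observe that $\MMM\setminus\BBB^*$ is contained in the union, over the three coordinate directions and all atomic cubes, of the slabs cut out by grid intervals of length at most $\eps_3$. Once $\BBB$ is fixed, there is a uniform finite bound $N$ on the number of grid intervals in any coordinate direction within any atomic cube, and each such thin slab (contained in a unit cube with one side of length at most $\eps_3$) has volume at most $\eps_3$. Summing yields $\lambda_3(\MMM\setminus\BBB^*)\le 3Ns\eps_3$, so it suffices to take $\eps_3<\eps_2/(3Ns)$ to secure (i).

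Condition (iv) would then be derived from (i) by a Chebyshev-type argument via Fubini. Setting
\begin{displaymath}
E=\{\bfx\in\PPP:\lambda_1(\{z\in[0,1):(\bfx,z)\notin\BBB^*\})\ge\eps_2^{1/2}\},
\end{displaymath}
Fubini yields
\begin{displaymath}
\eps_2>\lambda_3(\MMM\setminus\BBB^*)=\int_\PPP\lambda_1(\{z\in[0,1):(\bfx,z)\notin\BBB^*\})\,\dd\lambda_2\ge\eps_2^{1/2}\lambda_2(E),
\end{displaymath}
so $\lambda_2(E)<\eps_2^{1/2}$. Thus $\lambda_2(\PPP\setminus E)>s-\eps_2^{1/2}$, and every $\bfx\in\PPP\setminus E$ has vertical $\BBB^*$-slice of $\lambda_1$-measure strictly greater than $1-\eps_2^{1/2}$, which is precisely (iv).

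I do not anticipate any serious obstacle: this is a routine regular approximation result whose content is entirely combinatorial-geometric, and the hypothesis \eqref{eq3.3} on $\BBB$ plays no role in the proof -- the constant $\eps_1$ reappears only later, when $\BBB^*$ is deployed as a proxy for $\WWW$. The only minor point requiring care is to form the refining grid within individual atomic cubes, so that the cells of $\BBB^*$ are genuine axis-parallel rectangular boxes with respect to the polycube structure of $\MMM$ and are not entangled with its gluing identifications.
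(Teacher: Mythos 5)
Your proposal is correct and follows essentially the same route as the paper: both refine $\BBB$ inside each atomic cube by the axis-parallel hyperplanes through the box faces so that $\chi_\BBB$ is constant on the resulting boxes, secure the side-length bound while discarding only a small volume (the paper deletes thin $\eps_5$-neighbourhoods of the extended ``special'' faces, using their minimal separation $\eps_4$, whereas you discard grid cells with a short side and bound the loss by $3Ns\eps_3$ -- a cosmetic difference), and then derive (iv) by exactly the same Fubini--Chebyshev argument. Your observation that \eqref{eq3.3} is not needed in this lemma also matches the paper.
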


\begin{proof}
We assume, without loss of generality, that every axis-parallel rectangular box in the finite union $\BBB$
lies in the interior of an atomic cube of~$\MMM$, as illustrated in the picture on the left in Figure~1,
which only shows $2$ of the $3$ dimensions.
For each face of each axis-parallel rectangular box in an atomic cube of~$\MMM$,
we extend it to an axis-parallel \textit{special} square face of area $1$ in the same atomic cube,
as illustrated by the dashed lines in the picture on the right in Figure~1.
We repeat this process for every axis-parallel rectangular box in the finite union~$\BBB$.
Suppose that $\NNNN=\NNNN(\BBB)$ denotes the total number of distinct axis-parallel rectangular boxes in the union~$\BBB$.
Then there are at most $2\NNNN$ such distinct special square faces in each of the $3$ axis-parallel directions.
It follows that there exists a number $\eps_4=\eps_4(\BBB)>0$ such that any special square face in an atomic cube
has distance at least $\eps_4$ from any other parallel special square face in the atomic cube
or from any parallel boundary square face of the atomic cube.
We now remove every point in $\MMM$ that lies a distance less than $\eps_5>0$ from some special square face
in the atomic cube that contains that point.
Then the set of such points in $\MMM$ that are removed has measure at most $12\NNNN\eps_5$,
and is represented by the regions shaded in light gray in the picture on the right in Figure~1.

\begin{displaymath}
\begin{array}{c}
\includegraphics[scale=0.8]{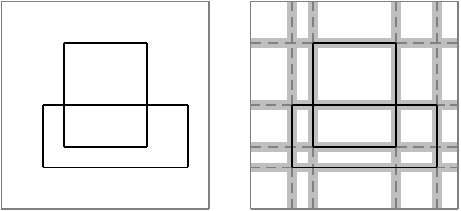}
\\
\mbox{Figure 1: idea behind the construction of the set $\BBB^*$}
\end{array}
\end{displaymath}

Let $\BBB^*$ denote the remainder of $\MMM$ after these points are removed.
Then $\BBB^*$ is clearly a finite union of disjoint axis-parallel rectangular boxes in~$\MMM$,
where each box is either contained in $\BBB$ or disjoint from~$\BBB$, so that the condition (ii) is satisfied.
Suppose now that $\eps_5>0$ is chosen to satisfy
\begin{equation}\label{eq3.4}
12\NNNN\eps_5<\eps_2
\quad\mbox{and}\quad
2\eps_5<\eps_4.
\end{equation}
Then the first condition in \eqref{eq3.4} ensures that the condition (i) is satisfied,
while the second condition in \eqref{eq3.4} ensures that the side lengths of each
axis-parallel rectangular box in the union $\BBB^*$ is greater than $\eps_3=\eps_4-2\eps_5$,
so that the condition (iii) is satisfied.

To establish the condition (iv), note that it follows from the condition (i) and the Fubini theorem that
\textcolor{white}{xxxxxxxxxxxxxxxxxxxxxxxxxxxxxx}
\begin{equation}\label{eq3.5}
\int_\PPP\lambda_1(\{z\in[0,1):(\bfx,z)\not\in\BBB^*\})\,\dd\bfx<\eps_2.
\end{equation}
Let
\textcolor{white}{xxxxxxxxxxxxxxxxxxxxxxxxxxxxxx}
\begin{displaymath}
\EEE=\left\{\bfx\in\PPP:\lambda_1(\{z\in[0,1):(\bfx,z)\not\in\BBB^*\})\ge\eps_2^{1/2}\right\}.
\end{displaymath}
Then it follows from \eqref{eq3.5} that $\lambda_2(\EEE)\eps_2^{1/2}<\eps_2$,
from which the condition (iv) follows immediately, since $\lambda_2(\PPP)=s$.
\end{proof}

For almost every $\bfx\in\PPP$, the set
\begin{displaymath}
U(\WWW;\bfx)=\{z\in[0,1):(\bfx,z)\in\WWW\}
\end{displaymath}
is measurable, and it follows from \eqref{eq3.2} that
\begin{equation}\label{eq3.6}
\frac{1}{s}\le\lambda_1(U(\WWW;\bfx))=\frac{f_\WWW}{s}\le\frac{s-1}{s}.
\end{equation}

We consider Lebesgue measurable subsets $U_\sigma\subset[0,1)$, $\sigma=1,\ldots,s$, of the unit torus $[0,1)$.
In particular, we make the assumption that $0<\lambda_1(U_1)<1$,
where $\lambda_1$ denotes $1$-dimensional Lebesgue measure.
Furthermore, for any real number $u\in\Rr$ and any $\sigma=1,\ldots,s$,
we consider the $(-u)$-translated copy of~$U_\sigma$, given by
\begin{displaymath}
U_\sigma-u=\{\{z-u\}:z\in U_\sigma\}.
\end{displaymath}

Let $\bfx_1,\ldots,\bfx_s\in\PPP$ be distinct points such that their images under projection modulo~$1$
to the unit torus $[0,1)^2$ coincide.
We now apply the following variant of \cite[Lemma~6.1]{BCY-a} to the sets
\begin{equation}\label{eq3.7}
U_1=U(\WWW;\bfx_1),
\quad
\ldots,
\quad
U_s=U(\WWW;\bfx_s),
\end{equation}
so that for every $\sigma=1,\ldots,s$,
\begin{displaymath}
z\in U_\sigma
\quad\mbox{if and only if}\quad
(\bfx_\sigma,z)\in\WWW.
\end{displaymath}

\begin{lemma}\label{lem33}
The set of values $u_0\in[0,1)$ for which the inequalities
\begin{equation}\label{eq3.8}
\lambda_1(U_1\bigtriangleup(U_\sigma-u_0))\ge\frac{1}{32s^2}\lambda_1(U_1)(1-\lambda_1(U_1)),
\quad
\sigma=1,\ldots,s,
\end{equation}
hold simultaneously has Lebesgue measure at least~$1/2$.
\end{lemma}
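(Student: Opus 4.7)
The plan is to bound, for each $\sigma = 1, \ldots, s$, the Lebesgue measure of the bad set
$$B_\sigma = \{u_0 \in [0,1) : \lambda_1(U_1 \triangle (U_\sigma - u_0)) < c\}, \quad c = \frac{\lambda_1(U_1)(1 - \lambda_1(U_1))}{32 s^2},$$
by $1/(2s)$; a union bound over $\sigma$ will then give $\lambda_1(\bigcup_\sigma B_\sigma) \le 1/2$, which is equivalent to the conclusion. Throughout, write $\alpha = \lambda_1(U_1)$ and $\beta_\sigma = \lambda_1(U_\sigma)$.

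First I would dispose of the easy case $|\alpha - \beta_\sigma| \ge c$ by invoking the elementary inequality $\lambda_1(A \triangle B) \ge |\lambda_1(A) - \lambda_1(B)|$, which forces $B_\sigma = \emptyset$. So henceforth assume $|\alpha - \beta_\sigma| < c$.

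In the non-trivial case, the key step is the triangle inequality for the symmetric-difference pseudo-metric: for $u_0, u_0' \in B_\sigma$, applying it to $U_\sigma - u_0$, $U_1$ and $U_\sigma - u_0'$ gives $\lambda_1(U_\sigma \triangle (U_\sigma - (u_0 - u_0'))) < 2c$. This yields the containment
$$B_\sigma - B_\sigma \subset G_\sigma := \{t \in [0,1) : \lambda_1(U_\sigma \triangle (U_\sigma - t)) < 2c\}.$$
Combined with the Kneser/Steinhaus-type inequality $\lambda_1(B_\sigma - B_\sigma) \ge \min(2\lambda_1(B_\sigma), 1)$ in the torus, provided we can show $\lambda_1(G_\sigma) < 1$ we end up with $\lambda_1(B_\sigma) \le \lambda_1(G_\sigma)/2$.

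The remaining task, and the technical core of the proof, is a sharp upper bound on $\lambda_1(G_\sigma)$. For this, expand $\chi_{U_\sigma} = \sum_n b_n \ee^{2\pi \ii n z}$ and use the Fourier identity
$$\lambda_1(U_\sigma \triangle (U_\sigma - t)) = 4 \sum_{n \ne 0} |b_n|^2 \sin^2(\pi n t),$$
together with the averaging identity $\int_0^1 \lambda_1(U_\sigma \triangle (U_\sigma - t)) \,\dd t = 2\beta_\sigma(1-\beta_\sigma)$ and the pointwise bound $\lambda_1(U_\sigma \triangle (U_\sigma - t)) \le 2\min(\beta_\sigma, 1 - \beta_\sigma)$. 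A careful balance among these ingredients should give a bound on $\lambda_1(G_\sigma)$ of order $1/s$, from which $\lambda_1(B_\sigma) \le 1/(2s)$ follows.

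The hard part will be this concentration estimate for $\lambda_1(G_\sigma)$. Naive approaches such as Chebyshev's inequality on the variance $\sum_{n \ne 0} |b_n|^4$, or simply combining the averaging identity with the $L^\infty$ bound, yield only $\lambda_1(G_\sigma) \lesssim \min(\beta_\sigma, 1 - \beta_\sigma)$, which is too weak whenever $\beta_\sigma$ is bounded away from $\{0, 1\}$. The specific factor $1/(32 s^2)$ in the definition of $c$ is calibrated so that a refined squeeze argument (exploiting both the Fourier structure and the $L^\infty$ constraint, possibly together with a further triangle-inequality bootstrap $(B_\sigma - B_\sigma) - (B_\sigma - B_\sigma) \subset \{t : \phi_\sigma(t) < 4c\}$) beats the crude bound by the factor $s$ needed to make the union bound close.
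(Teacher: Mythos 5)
The paper does not actually prove Lemma~\ref{lem33} in-text; it imports it as a variant of \cite[Lemma~6.1]{BCY-a}, so there is no proof here to compare against directly. Judging your argument on its own terms: the framework is sound. The union bound over $\sigma$, the disposal of the trivial case $|\alpha-\beta_\sigma|\ge c$ via $\lambda_1(A\bigtriangleup B)\ge|\lambda_1(A)-\lambda_1(B)|$, the triangle-inequality containment $B_\sigma-B_\sigma\subset G_\sigma$, and the appeal to the Raikov/Steinhaus sumset inequality on the circle are all correct steps. But the proposal is genuinely incomplete at exactly the place you flag: you never establish the concentration estimate $\lambda_1(G_\sigma)\lesssim 1/s$, and the Fourier machinery you suggest for it (the $\sin^2$ identity, the variance $\sum|b_n|^4$, Chebyshev) does not close the gap, as you yourself observe.

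The missing step is in fact elementary, and your parenthetical ``bootstrap'' remark is the right idea --- you simply need to iterate it and combine it with the averaging identity you already wrote down. Put $\phi_\sigma(t)=\lambda_1(U_\sigma\bigtriangleup(U_\sigma-t))$. This map is subadditive on the circle, since $\lambda_1(U\bigtriangleup(U-t_1-t_2))\le\lambda_1(U\bigtriangleup(U-t_1))+\lambda_1((U-t_1)\bigtriangleup(U-t_1-t_2))=\phi_\sigma(t_1)+\phi_\sigma(t_2)$; hence the $k$-fold sumset of $G_\sigma$ lies in $\{t:\phi_\sigma(t)<2kc\}$. Raikov's inequality $\lambda_1(A_1+\cdots+A_k)\ge\min(\lambda_1(A_1)+\cdots+\lambda_1(A_k),1)$ then shows: if $\lambda_1(B_\sigma)>1/(2s)$, hence $\lambda_1(G_\sigma)>1/s$, then the $s$-fold sumset of $G_\sigma$ has full measure, so $\{t:\phi_\sigma(t)<2sc\}$ has full measure and therefore $\int_0^1\phi_\sigma\le 2sc$. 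On the other hand $\int_0^1\phi_\sigma=2\beta_\sigma(1-\beta_\sigma)$, and in the nontrivial case $|\alpha-\beta_\sigma|<c$ gives $\beta_\sigma(1-\beta_\sigma)\ge\alpha(1-\alpha)-c>\tfrac{31}{32}\alpha(1-\alpha)$; combined with $2sc=\alpha(1-\alpha)/(16s)$ this forces $31<1/s$, which is absurd. Hence $\lambda_1(B_\sigma)\le 1/(2s)$ and the union bound finishes the lemma. No Fourier analysis is needed; subadditivity of $\phi_\sigma$, the sumset inequality, and the single averaging identity $\int_0^1\phi_\sigma=2\beta_\sigma(1-\beta_\sigma)$ suffice. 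In short, the gap is real but fillable, and the tool you need is one you had already named without recognizing that, iterated $s$ times and fed into the integral constraint, it finishes the job.
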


It then follows on combining \eqref{eq3.6}--\eqref{eq3.8} that
the set of values $u_0\in[0,1)$ for which the inequalities
\textcolor{white}{xxxxxxxxxxxxxxxxxxxxxxxxxxxxxx}
\begin{equation}\label{eq3.9}
\lambda_1(U_1\bigtriangleup(U_\sigma-u_0))\ge\frac{1}{32s^4},
\quad
\sigma=1,\ldots,s,
\end{equation}
hold simultaneously has Lebesgue measure at least~$1/2$.
Let
\begin{equation}\label{eq3.10}
\SSSS(\WWW;\bfx_1)=\{u_0\in[0,1):\mbox{\eqref{eq3.9} holds for every }\sigma=1,\ldots,s\}
\end{equation}
denote this set of such values of~$u_0$.
Note that the condition \eqref{eq3.9} is equivalent to the condition
\begin{equation}\label{eq3.11}
\lambda_1(\{z\in[0,1):\chi_\WWW(\bfx_1,z)\chi_\WWW(\bfx_\sigma,\{z+u_0\})=0\})\ge\frac{1}{32s^4},
\quad
\sigma=1,\ldots,s.
\end{equation}

Let us revisit the disjoint union $\BBB^*=\BBB^*(\BBB;\eps_2;\eps_3)$ of axis-parallel rectangular boxes.
For each axis-parallel rectangular box in this union, we push each boundary face inwards by a distance~$\eps_6$,
where the parameter $\eps_6>0$, to be specified later, satisfies
\textcolor{white}{xxxxxxxxxxxxxxxxxxxxxxxxxxxxxx}
\begin{displaymath}
0<\eps_6<\frac{\eps_3}{2},
\end{displaymath}
where $\eps_3$ is a lower bound of the side lengths of these axis-parallel rectangular boxes.
Then the resulting smaller axis-parallel rectangular box has volume which is at least $(1-2\eps_6/\eps_3)^3$
times that of the original axis-parallel rectangular box.
This means that if $\BBB^{**}=\BBB^{**}(\BBB^*;\eps_6)$ is the disjoint union of these smaller
axis-parallel rectangular boxes, then using Lemma~\ref{lem32}(i) and writing
\begin{equation}\label{eq3.12}
\eps_7=\eps_2+\frac{6s\eps_6}{\eps_3},
\end{equation}
we have
\textcolor{white}{xxxxxxxxxxxxxxxxxxxxxxxxxxxxxx}
\begin{align}\label{eq3.13}
\lambda_3(\BBB^{**})
&
\ge\left(1-\frac{2\eps_6}{\eps_3}\right)^3\lambda_3(\BBB^*)
\ge\left(1-\frac{2\eps_6}{\eps_3}\right)^3(s-\eps_2)
\nonumber
\\
&
\ge\left(1-\frac{6\eps_6}{\eps_3}\right)(s-\eps_2)
\ge s-\eps_2-\frac{6s\eps_6}{\eps_3}
=s-\eps_7.
\end{align}

\begin{remark}
Note that the $\eps_6$-neighbourhood of every point in $\BBB^{**}$ is contained in~$\BBB^*$.
\end{remark}

Observe that $\chi_\BBB(\bfs')=\chi_\BBB(\bfs'')$ if the points $\bfs',\bfs''\in\MMM$ belong to the same axis-parallel
rectangular box in the disjoint union~$\BBB^{**}$,
and that the side lengths of each axis-parallel rectangular box in the disjoint union $\BBB^{**}$ is greater than~$\eps_3-2\eps_6$.
Furthermore, analogous to Lemma~\ref{lem32}(iv), we have the inequality
\begin{equation}\label{eq3.14}
\lambda_2\left(\left\{\bfx\in\PPP:\lambda_1(\{z\in[0,1):(\bfx,z)\in\BBB^{**}\})>1-\eps_7^{1/2}\right\}\right)>s-\eps_7^{1/2}.
\end{equation}

For any $\bfx\in\PPP$, let $\bfx_1,\ldots,\bfx_s\in\PPP$ be distinct points such that their images under projection
modulo~$1$ to the unit torus $[0,1)^2$ coincides with the image of $\bfx$ under the same projection.
Then it follows from \eqref{eq3.14} that
\begin{align}\label{eq3.15}
&
\lambda_2\left(\left\{\bfx\in\PPP:\lambda_1(\{z\in[0,1):(\bfx_\sigma,z)\in\BBB^{**}\})>1-\eps_7^{1/2}
\mbox{ for every $\sigma=1,\ldots,s$}\right\}\right)
\nonumber
\\
&\quad
>s-s\eps_7^{1/2}.
\end{align}

For every $u_0\in[0,1)$, let
\begin{displaymath}
\mu(u_0)=\lambda_2(\{\bfx_1\in\PPP:u_0\in\SSSS(\WWW;\bfx_1)\})
\end{displaymath}
denote some \textit{relevant multiplicity} of~$u_0$.
Then
\begin{displaymath}
\int_{[0,1)}\mu(u_0)\,\dd u_0
=\int_\PPP\lambda_1(\SSSS(\WWW;\bfx_1))\,\dd\bfx_1
\ge\frac{s}{2},
\end{displaymath}
in view of \eqref{eq3.10}.
This can be interpreted to say that the \textit{average multiplicity} $\mu(u_0)$ of $u_0\in[0,1)$ is at least~$s/2$.
Thus there exists a shift $u^*_0\in[0,1)$ such that
\begin{displaymath}
\mu(u_0^*)=\lambda_2(\{\bfx_1\in\PPP:u_0^*\in\SSSS(\WWW;\bfx_1)\})\ge\frac{s}{2}.
\end{displaymath}
It also follows from \eqref{eq3.15} that with the exception of at most $s\eps_7^{1/2}$ part of $\bfx\in\PPP$,
at least $1-2\eps_7^{1/2}$ part of the real numbers $z\in[0,1)$ are such that with $\bfx=\bfx_1$,
\begin{displaymath}
(\bfx_\sigma,z)\in\BBB^{**},
\quad
(\bfx_\sigma,\{z+u_0^*\})\in\BBB^{**},
\quad
\sigma=1,\ldots,s.
\end{displaymath}

In order to derive a contradiction, we need a density analogue of \cite[Lemma~6.2]{BCY-a}.
Here $\Vert\beta\Vert$ denotes the distance of $\beta\in\Rr$ to the nearest integer.

\begin{lemma}\label{lem34}
Let $\bfv=(v_1,v_2)\in\Rr^2$ be a Kronecker vector, and let $w\in\Rr$ be arbitrary such that
$\bfw=(v_1,v_2,w)\in\Rr^3$ is a Kronecker vector.
Let $\eps_6>0$ be given.
There exists a finite sequence
\textcolor{white}{xxxxxxxxxxxxxxxxxxxxxxxxxxxxxx}
\begin{displaymath}
1\le m_1(\eps_6)<\ldots<m_k(\eps_6)
\end{displaymath}
of positive integers such that
\begin{equation}\label{eq3.16}
\Vert m_j(\eps_6)v_1\Vert<\eps_6,
\quad
\Vert m_j(\eps_6)v_2\Vert<\eps_6,
\quad
j=1,\ldots,k,
\end{equation}
and the finite sequence $\{m_j(\eps_6)w\}$, $j=1,\ldots,k$, visits every subinterval of $[0,1)$ with length~$\eps_6$.
\end{lemma}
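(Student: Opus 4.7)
The plan is to apply the classical three-dimensional Kronecker--Weyl equidistribution theorem to the triple $(v_1,v_2,w)$. Since by hypothesis $v_1,v_2,w,1$ are linearly independent over $\Qq$, the sequence
\[
(\{mv_1\},\{mv_2\},\{mw\}),\quad m=1,2,3,\ldots,
\]
is equidistributed in the unit torus $[0,1)^3$. The conditions \eqref{eq3.16} amount to requiring that the first two coordinates lie in a fixed neighbourhood of $0$ modulo~$1$, so the task is to show that along the subsequence of $m$ for which this happens, the third coordinate $\{mw\}$ covers every length-$\eps_6$ interval in $[0,1)$.

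Concretely, I would fix a finite family of target intervals $I_\ell=[(\ell-1)\eps_6/2,\ell\eps_6/2)$ for $\ell=1,\ldots,L$, where $L=\lceil 2/\eps_6\rceil$, and consider the axis-parallel boxes
\[
B_\ell=E\times E\times I_\ell\subset[0,1)^3,\quad\text{where}\quad E=\{x\in[0,1):\Vert x\Vert<\eps_6\}.
\]
Each $B_\ell$ has positive $3$-dimensional Lebesgue measure $2\eps_6^3$, so by equidistribution there exists an integer $N_0=N_0(\eps_6,v_1,v_2,w)$ such that, for every $\ell=1,\ldots,L$, one can find some $m_\ell^\ast\in\{1,\ldots,N_0\}$ with $(\{m_\ell^\ast v_1\},\{m_\ell^\ast v_2\},\{m_\ell^\ast w\})\in B_\ell$.

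The desired sequence $m_1<\ldots<m_k$ would then be the full enumeration of all integers $m\in\{1,\ldots,N_0\}$ satisfying $\Vert mv_1\Vert<\eps_6$ and $\Vert mv_2\Vert<\eps_6$. Each $m_\ell^\ast$ appears on this list, so \eqref{eq3.16} is automatic. For the density statement, any subinterval $J\subset[0,1)$ of length $\eps_6$ contains some $I_\ell$ entirely, because the left endpoints of the $I_\ell$ form an arithmetic progression of step $\eps_6/2<\eps_6$, and a direct arithmetic check places a valid index $\ell$ in $\{1,\ldots,L\}$. For that $\ell$ we have $\{m_\ell^\ast w\}\in I_\ell\subset J$, so $J$ is visited.

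I do not expect a genuine obstacle here: the statement is essentially a repackaging of the $3$-dimensional Kronecker--Weyl theorem, combined with the elementary observation that an interval of length $\eps_6$ always engulfs at least one of the chosen length-$(\eps_6/2)$ target intervals. The main point to get right is the mesh choice for the $I_\ell$, ensuring both that the $I_\ell$ cover $[0,1)$ and that each $J$ of length $\eps_6$ swallows one of them whole. The same scheme extends verbatim to the analogue in $d+1$ dimensions for $d\ge3$, using $(d+1)$-dimensional Kronecker--Weyl in place of the three-dimensional version.
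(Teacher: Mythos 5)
Your proposal is correct and follows essentially the same route as the paper: restrict attention to the indices $m$ whose first two coordinates $\{mv_1\},\{mv_2\}$ fall into a small neighbourhood of $0$ modulo~$1$, cover $[0,1)$ by a finite mesh of intervals of length strictly less than $\eps_6$, find for each mesh interval a qualifying $m$ whose third coordinate lands there, and take the union. The only cosmetic difference is that you invoke the full Kronecker--Weyl \emph{equidistribution} theorem, whereas the paper gets by with the weaker Kronecker \emph{density} theorem (density of $m\bfw$ modulo~$1$ in $[0,1)^3$ already yields one visit to each positive-measure box $B_\ell$, which is all that is used); your mesh of length $\eps_6/2$ also works, by the boundary arithmetic you indicate, though the paper takes the slightly finer mesh $1/n$ with $n>2/\eps_6$.
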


\begin{remark}
Note that the number $k=k(\bfw;\eps_6)$ of terms of the finite sequence may depend on the choice of $\bfw$
and the value of~$\eps_6$.
\end{remark}

\begin{proof}[Proof of Lemma~\ref{lem34}]
By the Kronecker density theorem, the sequence
\begin{displaymath}
j\bfw=j(v_1,v_2,w),
\quad
j=1,2,3,\ldots,
\end{displaymath}
modulo~$1$ is dense in the unit torus $[0,1)^3$.
Let
\begin{displaymath}
m_1(\eps_6),m_2(\eps_6),m_3(\eps_6),\ldots
\end{displaymath}
be the infinite subsequence of $1,2,3,\ldots$ such that
\begin{displaymath}
\{m_j(\eps_6)v_1\},\{m_j(\eps_6)v_2\}\in[0,\eps_6)\cup(1-\eps_6,1),
\quad
j=1,2,3,\ldots.
\end{displaymath}
Clearly \eqref{eq3.16} holds.
Also, the subsequence $m_j(\eps_6)\bfw$, $j=1,2,3,\ldots,$ modulo~$1$ is dense in $([0,\eps_6)\cup(1-\eps_6,1))^2\times[0,1)\subset[0,1)^3$.
This implies that the sequence $\{m_j(\eps_6)w\}$, $j=1,2,3,\ldots,$ is dense in $[0,1)$.

Next, let the integer $n$ satisfy $n>2/\eps_6$.
We now partition the unit torus $[0,1)$ into $n$ short intervals $I_1,\ldots,I_n$ of length $1/n$ in the standard way.
Then for every $i=1,\ldots,n$, there exists an integer $k_i$ such that the finite sequence
\begin{displaymath}
\{m_j(\eps_6)w\},
\quad
j=1,\ldots,k_i,
\end{displaymath}
visits~$I_i$.
Let $k=\max\{k_1,\ldots,k_n\}$.
Then the finite sequence
\begin{equation}\label{eq3.17}
\{m_j(\eps_6)w\},
\quad
j=1,\ldots,k,
\end{equation}
visits every interval $I_1,\ldots,I_n$.
Now, since $1/n<\eps_6/2$, every subinterval of $[0,1)$ with length $\eps_6$ must contain at least one of the intervals
$I_1,\ldots,I_n$, so is visited by the finite sequence \eqref{eq3.17}.
\end{proof}

It follows that there exists $j_0=j_0(u_0^*)$ satisfying $1\le j_0\le k$ such that
\begin{equation}\label{eq3.18}
\Vert m_{j_0}(\eps_6)v_1\Vert<\eps_6,
\quad
\Vert m_{j_0}(\eps_6)v_2\Vert<\eps_6,
\quad
\Vert m_{j_0}(\eps_6)w-u_0^*\Vert<\eps_6.
\end{equation}

\begin{remark}
For a fixed point $\bfx\in\PPP$, we can visualize the set $\{(\bfx,z):z\in[0,1)\}$ as a \textit{circle}
over the point~$\bfx$, since $[0,1)$ is the unit torus.
For any point $(\bfx,z)$ on this circle, we have $\T^*_\bfw(\bfx,z)=(\bfx+\bfv,\{z+w\})$.
It follows that the image of the circle under the transformation $\T^*_\bfw$ is a circle
$\{(\bfy,z'):z'\in[0,1)\}$ over the point $\bfy=\bfx+\bfv$.
Clearly, this new circle is rotated from the original circle by a quantity~$w$
and its position on $\PPP$ is translated from the original position by a vector~$\bfv$.
This action is repeated multiple times when we apply the transformation $\T^*_\bfw$ successively.
Our proof of Theorem~\ref{thm4}((i)$\Rightarrow$(ii)) is based on a combination
of this fact with Lemmas \ref{lem32}--\ref{lem34}.
\end{remark}

Let us continue the discussion prior to Lemma~\ref{lem34}.
For at least $s/2-s\eps_7^{1/2}$ part of the points $\bfx\in\PPP$, we have $u_0^*\in\SSSS(\WWW;\bfx)$,
and that for at least $1-2\eps_7^{1/2}$ part of the real numbers $z\in[0,1)$, writing $\bfx=\bfx_1$, we have
\begin{equation}\label{eq3.19}
(\bfx,z)\in\BBB^{**},
\quad
(\bfx_\sigma,\{z+u_0^*\})\in\BBB^{**},
\quad
\sigma=1,\ldots,s.
\end{equation}
We say that the points in \eqref{eq3.19} form a \textit{good} $(s+1)$-tuple, in the sense that they all belong to~$\BBB^{**}$.

Let
\textcolor{white}{xxxxxxxxxxxxxxxxxxxxxxxxxxxxxx}
\begin{displaymath}
Q=(\bfx,z)=(\bfx_1,z),
\quad
Q_\sigma=(\bfx_\sigma,\{z+u_0^*\}),
\quad
\sigma=1,\ldots,s,
\end{displaymath}
be such a good $(s+1)$-tuple, and consider the new point $Q^*=(\T^*_\bfw)^{m_{j_0}}(Q)$,
obtained from $Q$ by $m_{j_0}$ successive applications of the transformation~$\T^*_\bfw$,
where $j_0$ is chosen so that the inequalities \eqref{eq3.18} hold.
Since the subset $\WWW\subset\MMM$ is invariant under the transformation~$\T^*_\bfw$,
it follows that $\chi_\WWW(Q)=\chi_\WWW(Q^*)$.

On the other hand, as observed in the Remark above, the image of the circle $\{(\bfx,z):z\in[0,1)\}$
under the transformation $(\T^*_\bfw)^{m_{j_0}}$ is a circle $\{(\bfy,z'):z'\in[0,1)\}$
with some particular $\bfy=\bfx_1+m_{j_0}\bfv\in\PPP$.
It then follows from \eqref{eq3.18} that the coordinates of $\bfy$ are $\eps_6$-close to
the corresponding coordinates of $\bfx_{\sigma_0}$ for a particular $\sigma_0=1,\ldots,s$,
and the last coordinate of $Q^*$ is $\eps_6$-close to $\{z+u_0^*\}$.
Since $Q_{\sigma_0}\in\BBB^{**}$, it follows from the Remark after \eqref{eq3.13} that
$Q_{\sigma_0}$ and $Q^*$ are in the same axis-parallel rectangular box in the disjoint union~$\BBB^*$,
and so $\chi_\BBB(Q_{\sigma_0})=\chi_\BBB(Q^*)$, in view of Lemma~\ref{lem32}(ii).

Recall that $u_0^*\in\SSSS(\WWW;\bfx)$ for at least $s/2-s\eps_7^{1/2}$ part of the points $\bfx\in\PPP$.
This and the condition \eqref{eq3.11} together imply that
\begin{displaymath}
\lambda_1(\{z\in[0,1):\chi_\WWW(\bfx_1,z)\chi_\WWW(\bfx_{\sigma_0},\{z+u_0^*\})=0\})\ge\frac{1}{32s^4},
\end{displaymath}
and this is equivalent to
\begin{equation}\label{eq3.20}
\lambda_1(\{z\in[0,1):\chi_\WWW(Q)\ne\chi_\WWW(Q_{\sigma_0})\})\ge\frac{1}{32s^4}.
\end{equation}
Consider now the three relations
\begin{displaymath}
\chi_\WWW(Q)=\chi_\WWW(Q^*),
\quad
\chi_\BBB(Q_{\sigma_0})=\chi_\BBB(Q^*),
\quad
\chi_\WWW(Q)\ne\chi_\WWW(Q_{\sigma_0}),
\end{displaymath}
which clearly imply the two relations
\begin{displaymath}
\chi_\WWW(Q_{\sigma_0})\ne\chi_\WWW(Q^*),
\quad
\chi_\BBB(Q_{\sigma_0})=\chi_\BBB(Q^*).
\end{displaymath}
It follows that
\textcolor{white}{xxxxxxxxxxxxxxxxxxxxxxxxxxxxxx}
\begin{equation}\label{eq3.21}
\chi_\WWW(Q_{\sigma_0})\ne\chi_\BBB(Q_{\sigma_0})
\quad\mbox{or}\quad
\chi_\WWW(Q^*)\ne\chi_\BBB(Q^*).
\end{equation}
Intuitively speaking, \eqref{eq3.21} represents two \textit{negligible} cases, with total measure less than~$\eps_1$,
in view of \eqref{eq3.3}, which contradict the substantial constant lower bound in \eqref{eq3.20}
if $\eps_1>0$ is sufficiently small.
To make this precise, we need to study more closely the various parameters.

We have $u_0^*\in\SSSS(\WWW;\bfx)$ for at least $s/2-s\eps_7^{1/2}$ part of the points $\bfx\in\PPP$.
Using \eqref{eq3.20}, we deduce that for at least
\textcolor{white}{xxxxxxxxxxxxxxxxxxxxxxxxxxxxxx}
\begin{equation}\label{eq3.22}
\frac{1}{32s^4}-2\eps_7^{1/2}
\end{equation}
part of the real numbers $z\in[0,1)$, the points
\begin{equation}\label{eq3.23}
Q_{\sigma_0}=(\bfx_{\sigma_0},\{z+u_0^*\})
\quad\mbox{and}\quad
Q^*=(\T^*_\bfw)^{m_{j_0}}(Q)=(\T^*_\bfw)^{m_{j_0}}(\bfx_1,z)
\end{equation}
exhibit the property \eqref{eq3.21}.
It follows from \eqref{eq3.22} that the $3$-dimensional Lebesgue measure of the points
$Q=(\bfx_1,z)\in\MMM$ such that  the points \eqref{eq3.23} exhibit the property \eqref{eq3.21} is at least
\textcolor{white}{xxxxxxxxxxxxxxxxxxxxxxxxxxxxxx}
\begin{equation}\label{eq3.24}
\left(\frac{s}{2}-s\eps_7^{1/2}\right)\left(\frac{1}{32s^4}-2\eps_7^{1/2}\right),
\end{equation}
where $\eps_7$ is given by \eqref{eq3.12}.

On the other hand, the property \eqref{eq3.21} is \textit{exceptional},
and \eqref{eq3.3} implies that the quantity in \eqref{eq3.24} is less than~$2\eps_1$.
We emphasize the fact that the choice of the parameter $\eps_1$ is independent of the choices
of the other parameters $\eps_2,\eps_3,\eps_6$.
Thus we can make $\eps_7$ in \eqref{eq3.24} arbitrarily small independently of the fixed value of~$\eps_1$.
It is therefore easy to specify the parameters $\eps_1,\eps_2,\eps_3,\eps_6$ so that the value of the quantity in
\eqref{eq3.24} is greater than~$2\eps_1$, leading to a contradiction.

The contradiction establishes the ergodicity of the $\bfw$-shift in~$\MMM$.

The last step is to extend ergodicity of the $\bfw$-shift in $\MMM$ to unique ergodicity
by using the standard argument in functional analysis.
This is possible, since the projection of $\MMM$ to the unit torus $[0,1)^3$
leads to unique ergodicity there.
\end{proof}

%
%

\end{document}